\theoremstyle{plain}
\newtheorem{theorem}                {Theorem}      [section]
\newtheorem{proposition}  [theorem]  {Proposition}
\newtheorem{lemma}        [theorem]  {Lemma}
\theoremstyle{definition}
\newtheorem{example}      [theorem]  {Example}
\newtheorem{remark}       [theorem]  {Remark}
\numberwithin{equation}{section}
\def \t{\mbox{${\mathbb T}$}}
\def \R{{\mathbb R}}
\def \s{\mathbb S}
\def \n{{\mathbb N}}
\def \link {~}
\def \1 {\`}
\DeclareMathOperator{\trace}{trace}
\def \1{\mbox{${\mathbf 1}$}}
\def \n{\mbox{${\mathbb N}$}}
\def \1{\`}
\numberwithin{equation}{section}
\begin{document}

\title[On the second variation of biharmonic tori in spheres]{On the second variation of the biharmonic Clifford torus in $\s^4$}

\author{S.~Montaldo}
\address{Universit\`a degli Studi di Cagliari\\
Dipartimento di Matematica e Informatica\\
Via Ospedale 72\\
09124 Cagliari, Italia}
\email{montaldo@unica.it}

\author{C.~Oniciuc}
\address{Faculty of Mathematics\\ ``Al.I. Cuza'' University of Iasi\\
Bd. Carol I no. 11 \\
700506 Iasi, ROMANIA}
\email{oniciucc@uaic.ro}

\author{A.~Ratto}
\address{Universit\`a degli Studi di Cagliari\\
Dipartimento di Matematica e Informatica\\
Via Ospedale 72\\
09124 Cagliari, Italia}
\email{rattoa@unica.it}

\begin{abstract} The flat torus $\t=\s^1\left (\frac{1}{2} \right ) \times \s^1\left (\frac{1}{2} \right )$ admits a proper biharmonic isometric immersion into the unit $4$-dimensional sphere $\s^4$ given by $\Phi=i \circ \varphi$, where $\varphi:\t \to \s^3(\frac{1}{\sqrt 2})$ is the minimal Clifford torus and $i:\s^3(\frac{1}{\sqrt 2}) \to \s^4$ is the biharmonic small hypersphere. 
\noindent The first goal of this paper is to compute the biharmonic \textit{index} and \textit{nullity} of the proper biharmonic immersion $\Phi$.
\noindent After, we shall study in the detail the kernel of the generalised Jacobi operator $I_2^\Phi$. We shall prove that it contains a direction which admits a natural variation with vanishing first, second and third derivatives, and such that the fourth derivative is negative.
\noindent In the second part of the paper we shall analyse the specific contribution  of $\varphi$ to the biharmonic index and nullity of $\Phi$. In this context, we shall study a more general composition $\tilde{\Phi}=\tilde{\varphi} \circ i$, where $\tilde{\varphi}: M^m \to \s^{n-1}(\frac{1}{\sqrt 2})$, $ m \geq 1$, $n \geq {3}$, is a minimal immersion and $i:\s^{n-1}(\frac{1}{\sqrt 2}) \to \s^n$ is the biharmonic small hypersphere. First, we shall determine a general sufficient condition which ensures that the second variation of $\tilde{\Phi}$ is nonnegatively defined on $\mathcal{C}\big (\tilde{\varphi}^{-1}T\s^{n-1}\big )$. Then we complete this type of analysis on our Clifford torus and, as a complementary result, we obtain the $p$-harmonic index and nullity of $\varphi$. In the final section we compare our general results with those which can be deduced from the study of the \textit{equivariant second variation}.
\end{abstract}

\subjclass[2000]{Primary: 58E20; Secondary: 53C43.}

\keywords{Biharmonic immersions, second variation, index, nullity}

\thanks{The first and the last author were supported by Fondazione di Sardegna (project STAGE) and are members of the Italian National Group G.N.S.A.G.A. of INdAM. The second author was partially supported by the grant PN-III-P4-ID-PCE-2020-0794.}
\maketitle
\section{Introduction}\label{intro}
{\it Harmonic maps} are the critical points of the {\em energy functional}
\begin{equation}\label{energia}
E(\phi)=\frac{1}{2}\int_{M}\,|d\phi|^2\,dv_M \,\, ,
\end{equation}
where $\phi:M\to N$ is a smooth map from a compact Riemannian
manifold $(M^m,g)$ to a Riemannian
manifold $(N^n,h)$. In particular, $\phi$ is harmonic if it is a solution of the Euler-Lagrange system of equations associated to \eqref{energia}, i.e.,
\begin{equation}\label{harmonicityequation}
  - d^* d \phi =   {\trace} \, \nabla d \phi =0 \,\, .
\end{equation}
The left member of \eqref{harmonicityequation} is a vector field along the map $\phi$ or, equivalently, a section of the pull-back bundle $\phi^{-1} TN$: it is called {\em tension field} and denoted $\tau (\phi)$. In addition, we recall that, if $\phi$ is an \textit{isometric immersion}, then $\phi$ is a harmonic map if and only if it defines a \textit{minimal submanifold} of $N$ (see \cite{EL1, EL83} for background).

A related topic of growing interest is the study of {\it biharmonic maps}. As suggested in \cite{EL83}, \cite{ES}, these maps, which provide a natural generalisation of harmonic maps, are defined as the critical points of the {\it bienergy functional} 
\begin{equation*}\label{bienergia}
    E_2(\phi)=\frac{1}{2}\int_{M}\,|d^*d\phi|^2\,dv_M=\frac{1}{2}\int_{M}\,|\tau(\phi)|^2\,dv_M\,\, .
\end{equation*}
There have been extensive studies on biharmonic maps. We refer to \cite{Chen, Jiang, LO, SMCO, Ou, ChenOu} for an introduction to this topic.
We observe that, obviously, any harmonic map is trivially biharmonic and an absolute minimum for the bienergy. Therefore, we say that a biharmonic map is {\it proper} if it is not harmonic and, similarly, a biharmonic isometric immersion is {\it proper} if it is not minimal.

Our paper is devoted to the study of the second variation of the bienergy functional. In order to introduce this topic, it is convenient to recall some basic facts about the generalised Jacobi operator $I_2^\phi(V)$ and the definition of index and nullity. More specifically, let $\phi:M\to N$ be a biharmonic map. We shall consider a two-parameter smooth variation $\left \{ \phi_{t,s} \right \}$ $(-\varepsilon <t,s < \varepsilon,\,\phi_{0,0}=\phi)$ and denote by $V,W$ their associated vector fields:
\begin{align}\label{V-W}
& V(x)= \left . \frac{\partial}{\partial t}\right |_{t=0} \phi_{t,0} (x) \in T_{\phi(x)}N \\ \nonumber
& W(x)= \left . \frac{\partial}{\partial s}\right |_{s=0} \phi_{0,s}(x) \in T_{\phi(x)}N\,.
\end{align}
Note that $V$ and $W$ are sections of $\phi^{-1}TN$. The {\it Hessian} of the bienergy functional $E_2$ at its critical point $\phi$ is defined by
\begin{equation*}\label{Hessian-definition}
H(E_2)_\phi (V,W)= \left . \frac{\partial^2}{\partial t \partial s}\right |_{(t,s)=(0,0)}  E_2 (\phi_{t,s}) \, .
\end{equation*}
The following theorem was obtained by Jiang and translated by Urakawa \cite{Jiang}:
\begin{theorem}\label{Hessian-Theorem} Let $\phi:M\to N$ be a biharmonic map between two Riemannian manifolds $(M^m,g)$ and $(N^n,h)$, where $M$ is compact. Then the Hessian of the bienergy functional $E_2$ at a critical point $\phi$ is given by
\begin{equation*}\label{Operator-Ir}
H(E_2)_\phi (V,W)=\left (  I_2^\phi(V),W \right )= \int_M \langle I_2^\phi(V),W \rangle \, dv_M \,\,,
\end{equation*}
where $I_2^\phi=\,:\mathcal{C}\left(\phi^{-1} TN\right) \to \mathcal{C}\left(\phi^{-1} TN\right)$ is a semilinear elliptic operator of order $4$.
\end{theorem}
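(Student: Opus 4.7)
The plan is to derive the Hessian directly from the first variation of $E_2$ and to exhibit the resulting bilinear form as an $L^2$ pairing with an explicit semilinear elliptic operator. First I would recall (or re-derive) the first variation formula: for a one-parameter variation $\phi_t$ with associated vector field $V = (\partial/\partial t)|_{t=0}\phi_t$,
\begin{equation*}
\left. \frac{d}{dt}\right|_{t=0} E_2(\phi_t) = \int_M \langle \tau_2(\phi), V\rangle \, dv_M \, ,
\end{equation*}
where $\tau_2(\phi) = -\Delta^\phi \tau(\phi) - \trace R^N(d\phi, \tau(\phi))\,d\phi$ is the \emph{bitension field} and $\Delta^\phi$ denotes the rough Laplacian on $\phi^{-1}TN$. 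Since $\phi$ is biharmonic, $\tau_2(\phi) = 0$, so differentiating this identity once more in a transverse direction $W$ kills the term involving the acceleration $\nabla_t V|_{t=0}$ and reduces $H(E_2)_\phi(V,W)$ to the linearisation of $\tau_2$ applied to $V$ and then paired with $W$. Setting
\begin{equation*}
I_2^\phi(V) := \left . \frac{d}{dt}\right |_{t=0}\tau_2(\phi_t)
\end{equation*}
therefore yields the required representation, and ensures that the definition is independent of the variation beyond its initial velocity~$V$.

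The next step is to unfold this linearisation term by term. Using that $(d/dt)|_{t=0}\tau(\phi_t)$ equals the classical Jacobi operator $J^\phi(V)$, and commuting $\nabla_t$ with $\Delta^\phi$ (which introduces a curvature commutator of the form $R^N(V, d\phi)\,\cdot$), one obtains the leading piece $-\Delta^\phi J^\phi(V)$. The remaining summands arise from differentiating $d\phi_t$ and the curvature factor $R^N(d\phi_t, \tau(\phi_t))\,d\phi_t$, producing terms linear in $V$ and its covariant derivatives up to order three, with coefficients built from $d\phi$, $\tau(\phi)$, $R^N$ and $\nabla R^N$. Collecting everything gives a formula of the schematic form
\begin{equation*}
I_2^\phi(V) = \Delta^\phi\Delta^\phi V + \mathcal R\bigl(V,\nabla V,\nabla^2 V,\nabla^3 V\bigr) \, .
\end{equation*}
From this expression the two qualitative claims drop out: the principal symbol at a cotangent vector $\xi$ is $|\xi|^4\,\Id$, so $I_2^\phi$ is elliptic of order four; semilinearity is immediate since every summand is linear in $V$ with smooth coefficients depending (nonlinearly) on $\phi$. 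A symmetric integration by parts that moves the two $\Delta^\phi$-factors onto $W$ further confirms formal self-adjointness, as expected for a Hessian.

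The main technical obstacle I anticipate is the careful bookkeeping of curvature terms. Every time $\nabla_t$ is pushed past a covariant derivative along $\phi$, or past the trace appearing in $\trace R^N(d\phi,\tau(\phi))\,d\phi$, an $R^N$- or $\nabla R^N$-term is produced; several such commutators occur inside $\Delta^\phi J^\phi(V)$ alone, and the signs and tensorial positions must be tracked with care so that the final expression matches the one obtained by Jiang and translated by Urakawa in \cite{Jiang}. Once this bookkeeping is executed cleanly, the identification $H(E_2)_\phi(V,W)=\int_M\langle I_2^\phi(V),W\rangle\,dv_M$ follows and $I_2^\phi$ is displayed as the semilinear elliptic operator of order $4$ claimed in the statement.
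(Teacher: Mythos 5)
The paper does not actually prove this statement: Theorem~\ref{Hessian-Theorem} is quoted as a known result of Jiang, translated by Urakawa \cite{Jiang}, and is used as a black box (only its specialisation \eqref{I2-general-case} to the target $\s^n$ is used later). So there is no in-paper proof to compare against; measured against the cited source, your outline follows essentially Jiang's own route: express the first variation of $E_2$ as an $L^2$-pairing with the bitension field, use $\tau_2(\phi)=0$ to kill the acceleration term in the mixed second derivative so that $H(E_2)_\phi(V,W)$ depends only on $V,W$, identify $I_2^\phi$ with the linearisation of the bitension field, and read off order and ellipticity from the principal symbol $|\xi|^4\Id$. The strategy is correct, but as written it is a plan rather than a proof: the substance of the theorem is precisely the ``curvature bookkeeping'' you defer, namely the explicit computation of the linearised bitension field (which in \cite{Jiang} produces the full formula whose sphere case is \eqref{I2-general-case}), together with the verification that the resulting bilinear form is symmetric, and none of that is carried out.

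One internal inconsistency should also be repaired. With your convention $\tau_2(\phi)=-\overline{\Delta}\tau(\phi)-\trace R^N(d\phi,\tau(\phi))\,d\phi$, the first variation is $\tfrac{d}{dt}\big|_{t=0}E_2(\phi_t)=-\int_M\langle\tau_2(\phi),V\rangle\,dv_M$, so the operator representing the Hessian is \emph{minus} the $t$-derivative of $\tau_2(\phi_t)$, not $+\tfrac{d}{dt}\big|_{t=0}\tau_2(\phi_t)$ as you define it; relatedly, the ``leading piece $-\overline{\Delta}J^\phi(V)$'' you announce equals $-\overline{\Delta}^2V+\dots$, which contradicts the principal part $+\overline{\Delta}^2V$ stated two lines later (and required to match \eqref{I2-general-case} and to make the leading term of the Hessian nonnegative). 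These are sign-convention slips rather than structural gaps, but they must be fixed for the identity $H(E_2)_\phi(V,W)=\int_M\langle I_2^\phi(V),W\rangle\,dv_M$ to come out with the operator the paper actually uses.
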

When the context is clear, we shall just write $I_2$ instead of $I_2^\phi$. 

Next, we want to give an explicit description of the operator $I_2$. To this purpose, let $\nabla^M, \nabla^N$ and $\nabla^{\phi}$ be the induced connections on the bundles $TM, TN$ and $\phi ^{-1}TN$ respectively. Then the \textit{rough Laplacian} on sections of $\phi^{-1} TN$, denoted by $\overline{\Delta}$, is defined by
\begin{equation*}\label{roughlaplacian}
    \overline{\Delta}=d^* d =-\sum_{i=1}^m\Big\{\nabla^{\phi}_{e_i}
    \nabla^{\phi}_{e_i}-\nabla^{\phi}_
    {\nabla^M_{e_i}e_i}\Big\}\,\,,
\end{equation*}
where $\{e_i\}_{i=1}^m$ is a local orthonormal frame field tangent to $M$. 

Let $\s^n(R)$ denote the Euclidean $n$-dimensional sphere of radius $R$. We shall write $\s^n$ when $R=1$. In the present paper, we shall only need the explicit expression of $I_2(V)$ in the case that the target manifold is $\s^n$. This useful formula, which was first given in \cite{Onic} and can be deduced from a more general result in \cite{Jiang}, is the following:
\begin{eqnarray}\label{I2-general-case}
\nonumber
I_2(V)&=&\overline{\Delta}^2 V+\overline{\Delta}\left ( {\rm trace}\langle V,d\phi \cdot \rangle d\phi \cdot - |d\phi|^2\,V \right)+2\langle d\tau(\phi),d\phi \rangle V+|\tau(\phi)|^2 V\\ \nonumber
&&-2\,{\rm trace}\langle V,d\tau(\phi) \cdot \rangle d\phi \cdot-2 \,{\rm trace} \langle \tau(\phi),dV \cdot \rangle d\phi \cdot - \langle \tau(\phi),V \rangle \tau(\phi)\\ 
&&+{\rm trace} \langle d\phi \cdot,\overline{\Delta}V \rangle d\phi \cdot +{\rm trace}\langle d\phi \cdot,\left ( {\rm trace}\langle
V,d\phi \cdot \rangle d\phi \cdot \right ) \rangle d\phi \cdot \\ \nonumber &&-2 |d\phi|^2\, {\rm trace}\langle d\phi \cdot,V \rangle
 d\phi\cdot 
+2 \langle dV,d\phi\rangle \tau(\phi) -|d\phi|^2 \,\overline{\Delta}V+|d\phi|^4 V\,, \nonumber
\end{eqnarray}
where $\cdot$ denotes trace with respect to a local orthonormal frame field on $M$. 

Next, we recall from the general theory that, since $M$ is compact, the spectrum
\begin{equation*}
\mu_1 < \mu_2 < \ldots < \mu_i < \ldots
\end{equation*}
of the generalised Jacobi operator $I_2(V)$ associated to the bienergy is \textit{discrete} and tends to $+\infty$ as $i$ tends to $+ \infty$. We denote by $\mathcal{V}_i$ the eigenspace associated to the eigenvalue $\mu_i$. Then we can define the biharmonic \textit{index} of $\phi$ as follows (see, for instance, \cite{Urakawa}):
\begin{equation}\label{Index-definition}
{\rm Index}_2(\phi) = \sum_{\mu_i <0} \dim(\mathcal{V}_i)\,.
\end{equation}
The biharmonic \textit{nullity} of $\phi$ is defined as the dimension of the kernel of $I_2$:
\begin{equation}\label{Nullity-definition}
{\rm Null}_2(\phi) =  \dim \left \{ V \in\mathcal{C}\left(\phi^{-1} TN\right) \, : \, I_2(V)=0\right \} =\dim \left({\rm Ker}(I_2)\right )\,.
\end{equation}
We say that a biharmonic map $\phi: M\to N$ is {\it stable} if ${\rm Index}_2(\phi)=0$. As pointed out in \cite{EL83}, this definition has to be regarded as a notion of \textit{second order stability}. This notion has a geometric meaning, that is it does not depend on the variation $\{\phi_t\}$ but only on the associated vector field $V=\partial \phi_t \slash \partial t \big|_{t=0}$. On the other hand, we know that for a variation $\{\phi_t\}$ with associated vector field $V$  which belongs to ${\rm Ker}(I_2)$, it may happen that 
\begin{equation}\label{high-order-inst}
\left. \frac{d^\ell}{dt^\ell} \right|_{t=0} E_2 (\phi_t)=0\,, \quad \ell=1,\ldots,k-1 \,; \quad
\left. \frac{d^k}{dt^k}\right|_{t=0} E_2 (\phi_t)  <0 \,\quad {\rm for\, some\, even }\,\,k\geq 4\,.
\end{equation}
If \eqref{high-order-inst} occurs, then the function $E_2 (\phi_t)$ has a local maximum at $t=0$.

In general, the study of the second variation of the bienergy functional is a complicated task and there are not many papers dealing with computations and estimates of the index and nullity of some proper biharmonic submanifolds in the Euclidean unit sphere $\s^n$ (for instance, see \cite {BFO,LO, TAMS, MOR3}). 

A natural first step is to investigate the second variation of a proper biharmonic submanifold of $\s^{n}$ which lies in the small hypersphere $\s^{n-1}(\frac{1}{\sqrt 2})$ as a minimal submanifold. In this order of ideas, one of the simpler examples is the \textit{proper biharmonic Clifford torus} in $\s^4$. More precisely, let $\t$ be the flat torus with radii equal to $1\slash 2$, i.e., 
\begin{equation}\label{def-toro}
\t=\s^1\left (\frac{1}{2} \right ) \times \s^1\left (\frac{1}{2} \right )\,.
\end{equation}
and denote by $\Phi:\t \to \s^4$ the proper biharmonic isometric immersion obtained as the composition of the minimal Clifford torus $\varphi:\t \to \s^3 \left ( \frac{1}{\sqrt 2}\right ) $ followed by the proper biharmonic inclusion $
i:\s^3 \left ( \frac{1}{\sqrt 2}\right ) \to \s^4$.
This example was partially discussed in \cite{LO}, where the authors conjectured that its biharmonic index is equal to $1$. In this paper we shall complete the study of this example and compute the exact values of both Null$_2(\Phi)$ and Index$_2(\Phi)$.

For future use, we point out that the we shall perform all the computations related to $\Phi=i \circ \varphi$ using the following explicit description:
\begin{eqnarray}\label{eq-mapdatoroasfera} \nonumber
\Phi:\t\quad &\to &\s^4 \subset \R^5\\
    \left ( \gamma, \vartheta \right )  &\mapsto &\left (\frac{1}{2} \cos \gamma,  \frac{1}{2} \sin \gamma,\frac{1}{2} \cos \vartheta,  \frac{1}{2} \sin \vartheta, \frac{1}{\sqrt 2}  \right ) \,, \quad 0 \leq \gamma, \vartheta \leq 2 \pi \,.
\end{eqnarray}

The first result of our paper is:
\begin{theorem}\label{Th-Main}
Let $\Phi:\t \to \s^4 $ be the proper biharmonic immersion defined in \eqref{eq-mapdatoroasfera}. Then
\begin{equation*}\label{eq-index+nullity-main}
\begin{array}{llll}
{\rm (i)}&{\rm Index}_2(\Phi)&=& 1 \,; \\ 
{\rm (ii)}&{\rm Null}_2(\Phi)&=& 11 \,. 
\end{array}
\end{equation*}
\end{theorem}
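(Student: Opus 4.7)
\emph{Strategy.} I would reduce the computation of $I_2^\Phi$ to a finite-dimensional spectral problem via Fourier decomposition on the flat torus $\t$, performed in a well-chosen orthonormal frame of the pull-back bundle $\Phi^{-1}T\s^4$. Using the explicit parametrisation \eqref{eq-mapdatoroasfera}, I first introduce the global orthonormal frame
\[
X_1 = 2\,\Phi_\gamma,\quad X_2 = 2\,\Phi_\vartheta,\quad \eta = \tfrac{1}{\sqrt 2}(\cos\gamma,\sin\gamma,-\cos\vartheta,-\sin\vartheta,0),\quad \xi = \sqrt 2\,e_5-\Phi,
\]
in which $\{X_1,X_2\}$ span $d\Phi(T\t)$, $\eta$ is the unit normal to $\varphi(\t)$ inside $\s^3(\tfrac{1}{\sqrt 2})$, and $\xi$ is the unit normal to the small hypersphere in $\s^4$. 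In this frame all the data entering \eqref{I2-general-case} are explicit and largely constant: $|d\Phi|^2=2$, $\tau(\Phi)=2\,\xi$, and (with $e_1=2\partial_\gamma$, $e_2=2\partial_\vartheta$) the only non-trivial connection coefficients are
\[
\nabla^\Phi_{e_1}X_1 = -\sqrt 2\,\eta+\xi,\qquad \nabla^\Phi_{e_1}\eta=\sqrt 2\,X_1,\qquad \nabla^\Phi_{e_1}\xi=-X_1,
\]
together with their $e_2$-analogues (in particular $\nabla^\Phi_{e_2}X_2=\sqrt 2\,\eta+\xi$).

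\emph{Fourier reduction.} Writing a general section as $V=aX_1+bX_2+c\eta+d\xi$ with $a,b,c,d\in C^\infty(\t)$ and substituting into \eqref{I2-general-case}, the many pointwise terms telescope because $\tau(\Phi)$ is a constant multiple of $\xi$ and $|d\Phi|^2$ is constant; one obtains a constant-coefficient system $I_2^\Phi(V)=\mathcal L(a,b,c,d)$ of four coupled fourth-order linear PDEs on $\t$. Since $\mathcal L$ commutes with the translations in $(\gamma,\vartheta)$, expanding
\[
(a,b,c,d)=\sum_{(k,l)\in\z^2}(a_{k,l},b_{k,l},c_{k,l},d_{k,l})\,e^{i(k\gamma+l\vartheta)}
\]
turns $I_2^\Phi$ into a family of Hermitian matrices $M_{k,l}$ on $\C^4$, whose entries are polynomials in $(k,l)$ of degree at most $4$. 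A lower bound on the leading symbol $\overline{\Delta}^2\sim 16(k^2+l^2)^2\,\mathrm{Id}$ (plus lower-order terms) shows that $M_{k,l}$ is positive definite outside a finite explicit list of low modes (essentially those with $k^2+l^2\le 4$), so only a handful of $4\times 4$ Hermitian matrices require explicit diagonalisation.

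\emph{Counting.} A direct calculation on the mode $(0,0)$ gives $I_2^\Phi(aX_1+bX_2+c\eta+d\xi)=-16\,d\,\xi$; hence $M_{0,0}$ has spectrum $\{0,0,0,-16\}$, yielding a single negative direction $V=\xi$ (the classical biharmonic instability of the small hypersphere, which gives (i)) together with three zero directions $V=X_1,X_2,\eta$. The ten-dimensional Lie algebra of Killing fields of $\s^4$, restricted to $\Phi$, automatically lies in ${\rm Ker}(I_2^\Phi)$ because isometries of $\s^4$ preserve biharmonicity; of those ten, $\xi_{12}$ and $\xi_{34}$ coincide with the constant-coefficient sections $X_1$ and $X_2$ of mode $(0,0)$, while the remaining eight split among the modes $(\pm 1,0)$ and $(0,\pm 1)$. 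To conclude (ii) I would check, for each $(k,l)$ in the finite short list, that $M_{k,l}$ is positive semidefinite and that its kernel is spanned exactly by the appropriate restrictions of Killing fields. The single extra zero eigenvector not coming from a Killing field is then the constant section $V=\eta$ of mode $(0,0)$, which is precisely the distinguished direction highlighted in the abstract.

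\emph{Main obstacle.} The real bottleneck is the faithful reduction of \eqref{I2-general-case} to the matrix $\mathcal L$: the master formula has many terms and one must propagate $\overline{\Delta}$ through the non-parallel frame without algebraic error. Once $\mathcal L$, and hence the family $\{M_{k,l}\}$, have been obtained, the final counting is a routine diagonalisation of finitely many small Hermitian matrices together with a matching against the list of ten Killing fields of $\s^4$.
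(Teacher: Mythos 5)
Your strategy is essentially the paper's: the frame you choose is (up to sign and naming) the paper's $\{V_\gamma,V_\vartheta,V_\nu,V_\eta\}$, your connection coefficients and $\tau(\Phi)=2\xi=-2V_\eta$ agree with its \eqref{eq-nabla-V-tutti}, and the Fourier reduction to a family of $4\times 4$ Hermitian blocks is the complex version of the paper's decomposition into the subspaces $S^{m,0}$, $S^{0,n}$, $S^{m,n}$ (real $8\times 8$ and $16\times 16$ matrices). The mode $(0,0)$ computation, with spectrum $\{-16,0,0,0\}$ and the extra non-Killing null direction $\eta=V_\nu$, is exactly what the paper finds. The one genuine divergence is how the infinitely many high modes are dismissed: the paper computes the characteristic polynomial for \emph{general} $(m,n)$ and applies Descartes' rule of signs (legitimate because the matrices are symmetric, hence all roots are real), which settles every mode uniformly; you instead invoke a leading-symbol domination argument to reduce to a finite list.

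Two concrete issues with your outline. First, the claimed cutoff ``essentially $k^2+l^2\le 4$'' is asserted, not proved, and a naive domination/Gershgorin bound does not deliver it: the couplings of the $V_\gamma$-row to $\eta$ and $\xi$ have size $8\sqrt2(2+\lambda)|k|+8\lambda|k|$ with $\lambda=4(k^2+l^2)$, which at $(k,l)=(2,1)$ already exceeds the diagonal margin, so the row test fails at $k^2+l^2=5$; a crude estimate of this type only takes over for $k^2+l^2$ of the order of a few dozen. Your route can be completed, but you must compute an explicit (larger) cutoff and then diagonalise all modes below it, or else argue as the paper does via the sign pattern of the characteristic polynomial coefficients. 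Second, your bookkeeping of the kernel is off: of the eight non-constant Killing directions, four (those built from products $\cos\gamma\cos\vartheta$, etc.) live in the modes $(\pm1,\pm1)$, and the modes $(\pm1,0),(0,\pm1)$ contribute only four real null directions, not eight; the correct count is $3+2+2+4=11$. Since $(\pm1,\pm1)$ lies inside any reasonable short list, an honest diagonalisation would catch this, but as stated the accounting that is supposed to yield (ii) is wrong.
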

The proof of Theorem\link\ref{Th-Main} will be carried out in Section\link\ref{proofs}.

In Section\link\ref{section-null} we shall completely determine the structure of ${\rm Ker}(I_2)$ putting in evidence the existence of sections which are not originated from the Killing vector fields. We shall also show that the interesting phenomenon \eqref{high-order-inst} does occur for a suitable variation and  $k=4$.

We observe that the Clifford torus $\varphi:\t \to \s^3 \left ( \frac{1}{\sqrt 2}\right ) $ is minimal, and so it is a stable critical point for the bienergy. By contrast, the small hypersphere $
i:\s^3 \left ( \frac{1}{\sqrt 2}\right ) \to \s^4$ is unstable with biharmonic index equal to $1$ (see \cite{LO}). Then it seems of interest to study the effects of this composition on the second variation. This type of analysis will be carried out in Section\link\ref{Composition}, where we shall study the specific contribution of the minimal immersion $\varphi$ to the biharmonic index of the composition $\Phi=i \circ \varphi$.  

In this context, we shall study a more general composition $\tilde{\Phi}=\tilde{\varphi} \circ i$, where $\tilde{\varphi}: M^m \to \s^{n-1}(\frac{1}{\sqrt 2})$, $ m \geq 1$, $n \geq {3}$, is a minimal immersion and $i:\s^{n-1}(\frac{1}{\sqrt 2}) \to \s^n$ is the biharmonic small hypersphere. First, we shall determine a general sufficient condition which ensures that the second variation of $\tilde{\Phi}$ is nonnegatively defined on $\mathcal{C}\big (\tilde{\varphi}^{-1}T\s^{n-1}\big )$. Then we complete this type of analysis on our Clifford torus.

As a complementary result, we shall also obtain the \textit{$p$-harmonic} index and nullity of the harmonic map $\varphi$ for any $p\geq 2$, where a \textit{$p$-harmonic map} $\phi:M\to N$ is a critical point of the \textit{$p$-energy functional}
\begin{equation}\label{p-energy-def}
E_{(p)}(\phi)=\frac{1}{p}\int_{M}\,|d\phi|^p\,dv_M \,.
\end{equation}

In general, working on the whole  $C^{\infty}(M,N)$, it is very difficult to carry out a complete study of the second variation of a given biharmonic map. Therefore, in order to obtain some partial, but geometrically interesting results, it seems of interest to develop a method of investigation reducing in a suitable way the domain of the bienergy functional. More precisely, when a compact Lie group of isometries $G$ acts on both $M$ and $N$, we can restrict to the set $\Sigma$ of all symmetric points with respect to the natural action of $G$ on $C^{\infty}(M,N)$.
In this spirit, in the final section, because the Clifford torus $\Phi(\t)$ in $\s^4$ is a $G$-invariant submanifold with $G={\rm SO}(2) \times {\rm SO}(2)$, we compare our general results with those which can be deduced from the study of the second variation restricted to the set $\Sigma$ of all symmetric points of $C^{\infty}(\t,\s^4)$ with respect to the action of $G$ (see \cite{Palais}).
\section{Proof of Theorem \ref{Th-Main}}\label{proofs}
The first step is to derive an explicit expression for the operator $I_2:\mathcal{C}\left(\Phi^{-1} T\s^4\right) \to \mathcal{C}\left(\Phi^{-1} T\s^4\right)$ using formula \eqref{I2-general-case}. To this purpose, we first introduce suitable vector fields along $\Phi$. 

More specifically, using Cartesian coordinates $y=\left(y^1,y^2,y^3,y^4,y^5 \right)$ on $\R^5$, we define
\begin{equation}\label{eq-def-V-vari}
V_{\gamma}=Y_{\gamma}(\Phi) \,; \quad V_{\vartheta}=Y_{\vartheta}(\Phi) \,; \quad V_{\nu}=Y_{\nu}(\Phi) \,; \quad V_{\eta}=Y_{\eta}(\Phi) \,,
\end{equation}
where
\begin{equation}\label{eq-definizioneY-vari}
\begin{array}{lll}
Y_{\gamma}&=& 2 \left (-y^2,y^1,0,0,0 \right ) \left ( =\left( -2 y^2 \frac{\partial}{\partial y^1}+2y^1 \frac{\partial}{\partial y^2}\right )\right ) \,; \\ 
Y_{\vartheta}&=& 2 \left (0,0,-y^4,y^3,0 \right ) \,; \\
Y_{\nu}&=& \sqrt 2 \left (y^1,y^2,-y^3,-y^4,0 \right ) \,; \\
Y_{\eta}&=&  \left (y^1,y^2,y^3,y^4,-\frac{1}{\sqrt 2} \right ) \,.  
\end{array}
\end{equation}
From a geometric viewpoint, we observe that $\left \{V_{\gamma},V_{\vartheta}, V_{\nu},V_{\eta} \right \}$ provides an orthonormal basis of $T\s^4$ at each point of the image of $\Phi$. Moreover, we point out that $V_{\gamma},V_{\vartheta}$ are tangent to the Clifford torus $\Phi \left(\t \right)$, $V_{\nu}$ represents the normal direction to the torus in $\s^3 \left( \frac{1}{\sqrt 2}\right )$, while $V_{\eta}$ is the normal to $\s^3 \left( \frac{1}{\sqrt 2}\right )$ in $\s^4$.

By way of summary, we conclude that each section $V \in \mathcal{C}\left( \Phi^{-1}T\s^4\right )$ can be written as
\begin{equation}\label{eq-general-section-toro-sfera}
V=f_1 \,V_\gamma +f_2\,V_\vartheta+f_3 \,V_\nu +f_4\,V_\eta \,,
\end{equation}
where $f_j\in C^\infty \left ( \t \right)$, $j=1,\ldots,4$. We also point out that $X_\gamma$ and $ X_\vartheta$, where
\begin{equation*}\label{eq-orth-base-dominio}
X_\gamma = 2 \,\frac{\partial}{\partial \gamma}\,,\, \quad X_\vartheta = 2 \,\frac{\partial}{\partial \vartheta}\,,
\end{equation*}
are globally defined vector fields which form an orthonormal basis at each point of $T\t$. Moreover, $d \Phi(X_\gamma)=Y_{\gamma}(\Phi)=V_\gamma$, $d \Phi(X_\vartheta)=Y_{\vartheta}(\Phi)=V_\vartheta$. 

For our purposes, it shall be sufficient to study in detail the case that the functions $f_j$ in \eqref{eq-general-section-toro-sfera} are eigenfunctions of the Laplacian.

Our first goal is to compute $\overline{\Delta} V_\gamma$, $\overline{\Delta} V_\vartheta$, $\overline{\Delta} V_\nu$, $\overline{\Delta} V_\eta$. To this purpose, we observe that the choice of vector fields of type \eqref{eq-definizioneY-vari} suggests that the simplest way to compute covariant derivatives along $\Phi$ is to use the following well-known formula, where $W$ is a section of the pull-back bundle $\Phi^{-1}T\s^4$ given by the composition of a (local) vector field on $\s^4$, again denoted by $W$, with $\Phi$:
\begin{equation*}\label{eq-cov-deriv-formula}
\nabla_X^\Phi W =  \nabla^{{\mathbb S}^4}_{d\Phi(X)} W =\nabla^{\R^5}_{d\Phi(X)} W+ \langle d\Phi(X),W \rangle \,\Phi \,.
\end{equation*}
We have:
\begin{eqnarray*}\label{eq-nabla-V-theta}\nonumber
\nabla_{X_\gamma}^\Phi V_\gamma &=& \nabla_{X_\gamma}^\Phi Y_\gamma(\Phi)=\nabla_{Y_\gamma(\Phi)}^{\s^4} Y_\gamma  \\\nonumber 
&=&  \nabla_{Y_\gamma(\Phi)}^{\R^5} Y_\gamma + \langle Y_\gamma (\Phi),Y_\gamma (\Phi) \rangle \,\Phi \\
&=& \left . \left [ \nabla^{\R^5}_{-2 y^2 \partial \slash \partial y^1+2 y^1 \partial \slash \partial y^2 }\left (-2\,y^2\,\frac{\partial}{\partial y^1}+2\,y^1\,\frac{\partial}{\partial y^2}\right )+ \left ( y^1,y^2,y^3,y^4,y^5\right )\right ] \right |_{\Phi}\\\nonumber
&=&\left . \left [\left ( -4 y^1,-4 y^2,0,0,0\right )+\left ( y^1,y^2,y^3,y^4, y^5\right )\right ]\right |_{\Phi}\\\nonumber
&=&\left .\left ( -3 y^1,-3 y^2,y^3,y^4, y^5\right )\right |_{\Phi}\\\nonumber
&=&- \sqrt 2 \, V_{\nu} - V_{\eta}\,.
\end{eqnarray*}
Similarly, using the same method of computation we obtain the following identities:

\begin{equation}\label{eq-nabla-V-tutti}
\begin{array}{lcl}
\nabla_{X_\gamma}^\Phi V_\gamma =-\, \sqrt 2 \, V_{\nu} - V_{\eta}\,; &\quad&\nabla_{X_\gamma}^\Phi V_\nu = \sqrt 2 \, V_\gamma\,; \\
\nabla_{X_\vartheta}^\Phi V_\gamma = 0\,; &\quad&\nabla_{X_\vartheta}^\Phi V_\nu =-\,\sqrt 2 \, V_\vartheta\,; \\
\nabla_{X_\gamma}^\Phi V_\vartheta = 0\,; &\quad& \nabla_{X_\gamma}^\Phi V_\eta = V_\gamma\,;\\
\nabla_{X_\vartheta}^\Phi V_\vartheta = \sqrt 2 \, V_{\nu} - V_{\eta}\,; &\quad& \nabla_{X_\vartheta}^\Phi V_\eta = V_\vartheta\,.
\end{array}
\end{equation}
Next, we compute using \eqref{eq-nabla-V-tutti} and obtain:
\begin{equation}\label{eq-Deltabar-V-tutti}
\begin{array}{llllll}
\overline{\Delta} V_\gamma &=&3\,V_\gamma \,; \qquad &\overline{\Delta} V_\vartheta &=& 3\,V_\vartheta \,;\\
\overline{\Delta} V_\nu &=&  4\,V_\nu \,; \qquad &\overline{\Delta} V_\eta &=& 2\,V_\eta\,.
\end{array}
\end{equation}
By way of example, we detail here the steps to obtain the first equality in \eqref{eq-Deltabar-V-tutti}:
\begin{eqnarray*}
\overline{\Delta} V_\gamma&=&- \left [\nabla^\Phi_{X_\gamma}\,\nabla^\Phi_{X_\gamma} V_\gamma +\nabla^\Phi_{X_\vartheta}\,\nabla^\Phi_{X_\vartheta} V_\gamma \right] \\
&=&- \left [\nabla^\Phi_{X_\gamma} \left (-\,\sqrt 2 \, V_\nu -\,V_\eta \right) +0\right]\\
&=& \sqrt 2 \, \nabla^\Phi_{X_\gamma} V_\nu +\nabla^\Phi_{X_\gamma} V_\eta \\
&=& 2\, V_\gamma + V_\gamma= 3\, V_\gamma \,. 
\end{eqnarray*}
In the sequel, we shall denote by $\Delta$ the Laplace operator on $\t$, i.e.,
\begin{equation}\label{eq:definition-laplace}
\Delta= -4\left (\frac{\partial^2}{\partial \gamma^2}+ \frac{\partial^2}{\partial \vartheta^2}\right )\,.
\end{equation}
\begin{lemma}\label{lemma3} Assume that $\Delta f= \lambda f$. Then
\begin{eqnarray*}\label{rough-fV-tutti}
{\rm (i)}\,\,\quad \overline{\Delta} (fV_\gamma)&=&(\lambda+3)f\, V_\gamma+4 \sqrt 2 \,f_\gamma \,V_\nu+4 f_\gamma\,V_\eta\,;\\ \nonumber
{\rm (ii)}\,\, \quad \overline{\Delta} (f V_\vartheta)&=&
(\lambda+3)f \,V_\vartheta-4 \sqrt 2 \,f_\vartheta \,V_\nu+4 f_\vartheta\,V_\eta\,;
\\\nonumber
{\rm (iii)}\,\,\quad \overline{\Delta} (fV_\nu)&=&-4 \sqrt 2 \,f_\gamma \,V_\gamma+4 \sqrt 2 f_\vartheta\,V_\vartheta+(\lambda+4)f \,V_\nu\,; \\ \nonumber
{\rm (iv)}\,\, \quad \overline{\Delta} (f V_\eta)&=&
-4  f_\gamma \,V_\gamma-4  f_\vartheta\,V_\vartheta+(\lambda+2)f \,V_\eta\,. \nonumber
\end{eqnarray*}
\end{lemma}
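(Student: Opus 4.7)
The plan is to reduce the lemma to a single Leibniz-type identity for the rough Laplacian and then substitute the data already collected in \eqref{eq-nabla-V-tutti} and \eqref{eq-Deltabar-V-tutti}. Since $\t$ is flat and the orthonormal frame $\{X_\gamma,X_\vartheta\}$ is a constant rescaling of the coordinate vector fields, one has $\nabla^M_{X_\gamma}X_\gamma=\nabla^M_{X_\vartheta}X_\vartheta=0$ and $[X_\gamma,X_\vartheta]=0$. Expanding
$$\nabla^\Phi_{e_i}\nabla^\Phi_{e_i}(fV)=(e_i e_i f)\,V+2(e_i f)\,\nabla^\Phi_{e_i}V+f\,\nabla^\Phi_{e_i}\nabla^\Phi_{e_i}V$$
and summing over $e_i\in\{X_\gamma,X_\vartheta\}$ yields the master identity
$$\overline{\Delta}(fV)=(\Delta f)\,V+f\,\overline{\Delta}V-2\bigl[(X_\gamma f)\,\nabla^\Phi_{X_\gamma}V+(X_\vartheta f)\,\nabla^\Phi_{X_\vartheta}V\bigr].$$

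Next, I would insert the hypothesis $\Delta f=\lambda f$ and use that $X_\gamma f=2f_\gamma$, $X_\vartheta f=2f_\vartheta$ by the definition of $X_\gamma,X_\vartheta$; then read off $\overline{\Delta}V_{\bullet}$ from \eqref{eq-Deltabar-V-tutti} together with the covariant derivatives $\nabla^\Phi_{X_\gamma}V_{\bullet},\,\nabla^\Phi_{X_\vartheta}V_{\bullet}$ from \eqref{eq-nabla-V-tutti}, for each of the four choices $V\in\{V_\gamma,V_\vartheta,V_\nu,V_\eta\}$. For example, for $V=V_\gamma$ the relevant data is $\overline{\Delta}V_\gamma=3V_\gamma$, $\nabla^\Phi_{X_\gamma}V_\gamma=-\sqrt{2}\,V_\nu-V_\eta$ and $\nabla^\Phi_{X_\vartheta}V_\gamma=0$, which immediately produces identity (i); the three remaining identities follow in exactly the same manner.

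There is no genuine obstacle: the argument is purely mechanical and reduces to assembling already computed ingredients via one Leibniz expansion. The only place where care is needed is the bookkeeping of signs and numerical factors, specifically the combination of the $-2$ appearing in the master identity with the factor $2$ coming from $X_\gamma=2\,\partial/\partial\gamma$ and $X_\vartheta=2\,\partial/\partial\vartheta$, which is precisely what accounts for the uniform $\pm 4$ and $\pm 4\sqrt{2}$ coefficients in the statement.
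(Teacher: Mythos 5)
Your proof is correct and follows essentially the same route as the paper: the paper's own argument consists precisely of citing the product formula $\overline{\Delta}(fV)=(\Delta f)V-2\nabla^{\Phi}_{\nabla f}V+f\,\overline{\Delta}V$ (which you derive from the Leibniz expansion, using flatness of $\t$) and then substituting \eqref{eq-nabla-V-tutti} and \eqref{eq-Deltabar-V-tutti}, exactly as you do. Your bookkeeping of the factors, e.g.\ $-2\cdot 2f_\gamma\cdot(-\sqrt{2}\,V_\nu-V_\eta)=4\sqrt{2}\,f_\gamma V_\nu+4f_\gamma V_\eta$ in case (i), checks out.
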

\begin{proof}[Proof of Lemma~\ref{lemma3}]
This lemma can be easily proved using \eqref{eq-nabla-V-tutti} and \eqref{eq-Deltabar-V-tutti} together with the general formula
\begin{equation*}\label{general-product-formula}
\overline{\Delta} (f\,V)=(\Delta f)\,V-2\,\nabla_{\nabla f}^{\Phi}V+f\,\overline{\Delta} V \,,
\end{equation*}
where $\nabla f= 2f_\gamma X_\gamma +2   f_\vartheta X_\vartheta$.
\end{proof}
\begin{lemma}\label{lemma4} Assume that $\Delta f= \lambda f$. Then
\begin{eqnarray*}\label{rough2-fV-tutti}
{\rm (i)}\,\,\quad \overline{\Delta}^2 (fV_\gamma)&=&\left[ (\lambda+3)^2 f -48 f_{\gamma \gamma}\right ]V_\gamma + \left [ 16f_{\gamma_\vartheta}\right ]V_\vartheta\\\nonumber
&&+ \left [ 4 \sqrt 2 (2\lambda+7)f_\gamma \right ]V_\nu +\left [ 4  (2\lambda+5)f_\gamma \right ]V_\eta\,; \\\nonumber
{\rm (ii)}\,\, \quad \overline{\Delta}^2 (f V_\vartheta)&=&\left [ 16f_{\gamma_\vartheta}\right ]V_\gamma + \left[ (\lambda+3)^2 f -48 f_{\vartheta \vartheta}\right ]V_\vartheta \\\nonumber
&&- \left [ 4 \sqrt 2 (2\lambda+7)f_\vartheta \right ]V_\nu +\left [ 4  (2\lambda+5)f_\vartheta \right ]V_\eta\,; \\\nonumber
{\rm (iii)}\,\,\quad \overline{\Delta}^2 (fV_\nu)&=&-\left[4 \sqrt 2 (2\lambda+7)f_\gamma \right ]V_\gamma +\left[4 \sqrt 2 (2\lambda+7)f_\vartheta \right ] V_\vartheta\\\nonumber
&&+ \left [ (\lambda^2 +16 \lambda +16) f \right ]V_\nu +\left [ -16 \sqrt 2 f_{\gamma \gamma}+16 \sqrt 2 f_{\vartheta \vartheta} \right ]V_\eta\,; \\ \nonumber
{\rm (iv)}\,\, \quad \overline{\Delta}^2 (f V_\eta)&=&-\left[ 4(2\lambda+5) f_\gamma \right ]V_\gamma -\left[ 4(2\lambda+5) f_\vartheta \right ]V_\vartheta\\\nonumber
&&+\left [ -16 \sqrt 2 f_{\gamma \gamma}+16 \sqrt 2 f_{\vartheta \vartheta} \right ]V_\nu +\left [ (\lambda^2+8\lambda+4)f \right ]V_\eta\,.\nonumber
\end{eqnarray*}
\end{lemma}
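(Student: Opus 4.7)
The proof is a direct iteration of Lemma \ref{lemma3}. The key observation is that on the flat torus $\t$ with Laplacian given by \eqref{eq:definition-laplace}, the operator $\Delta$ commutes with the partial derivatives $\partial_\gamma$ and $\partial_\vartheta$. Consequently, if $\Delta f = \lambda f$, then also $\Delta f_\gamma = \lambda f_\gamma$ and $\Delta f_\vartheta = \lambda f_\vartheta$. This means Lemma \ref{lemma3} is applicable not only to $f$ itself, but equally to $f_\gamma$, $f_\vartheta$, and their mixed partials.

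The plan is to apply $\overline{\Delta}$ to the right-hand side of each formula in Lemma \ref{lemma3}, using linearity. For instance, for (i) we start from
\begin{equation*}
\overline{\Delta}(f V_\gamma) = (\lambda+3) f\, V_\gamma + 4\sqrt{2}\, f_\gamma\, V_\nu + 4\, f_\gamma\, V_\eta,
\end{equation*}
and then apply $\overline{\Delta}$ term by term. The first summand contributes $(\lambda+3)$ times Lemma \ref{lemma3}(i). For the second summand we invoke Lemma \ref{lemma3}(iii) with $f$ replaced by $f_\gamma$ (legitimate because $\Delta f_\gamma = \lambda f_\gamma$), and for the third we use Lemma \ref{lemma3}(iv) similarly. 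Adding the three contributions and collecting coefficients of $V_\gamma$, $V_\vartheta$, $V_\nu$, $V_\eta$ yields the stated formula; one verifies, for example, that the $V_\gamma$ coefficient is $(\lambda+3)^2 f + 4\sqrt{2}(-4\sqrt{2} f_{\gamma\gamma}) + 4(-4 f_{\gamma\gamma}) = (\lambda+3)^2 f - 48 f_{\gamma\gamma}$, and the $V_\nu$ coefficient collapses to $4\sqrt{2}[(\lambda+3)+(\lambda+4)] f_\gamma = 4\sqrt{2}(2\lambda+7) f_\gamma$.

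The other three identities are handled by exactly the same procedure: iterate the appropriate line of Lemma \ref{lemma3}, then apply lines (i)--(iv) of the same lemma to each of the resulting summands (with $f$ replaced by $f_\gamma$ or $f_\vartheta$ as appropriate), and finally sum. In case (iii), for example, the cross terms $-4\sqrt{2} f_\gamma V_\gamma$ and $4\sqrt{2} f_\vartheta V_\vartheta$ generate (after a further $\overline{\Delta}$) contributions to $V_\nu$ and $V_\eta$ whose $V_\eta$ part is $-4\sqrt{2}(4 f_{\gamma\gamma}) + 4\sqrt{2}(4 f_{\vartheta\vartheta}) = -16\sqrt{2}(f_{\gamma\gamma} - f_{\vartheta\vartheta})$, matching the stated expression; the symmetry between (iii) and (iv) with the corresponding $V_\eta \leftrightarrow V_\nu$ mirroring provides a useful sanity check.

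The only real obstacle is bookkeeping: four starting formulas, each producing three or four second-generation terms, each of which in turn expands into four components via Lemma \ref{lemma3}. No step requires any new geometric input beyond what has already been established in \eqref{eq-nabla-V-tutti} and \eqref{eq-Deltabar-V-tutti}. To minimize errors, I would organize the computation in a $4\times 4$ table whose $(i,j)$-entry records the $V_j$-coefficient appearing in $\overline{\Delta}^2(f V_i)$, filling each row by superimposing the contributions from the three or four intermediate summands produced at the first application of $\overline{\Delta}$.
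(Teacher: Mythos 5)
Your proposal is correct and takes essentially the same route as the paper: the paper likewise observes that $f_\gamma$ and $f_\vartheta$ are eigenfunctions of $\Delta$ with the same eigenvalue $\lambda$ (because $\partial/\partial\gamma$ and $\partial/\partial\vartheta$ are Killing on the flat torus) and then simply iterates Lemma~\ref{lemma3}. The only detail you leave implicit is that in cases (iii) and (iv) the diagonal coefficients $(\lambda^2+16\lambda+16)f$ and $(\lambda^2+8\lambda+4)f$ are obtained after also substituting $f_{\gamma\gamma}+f_{\vartheta\vartheta}=-\lambda f/4$ from \eqref{eq:definition-laplace}, which is exactly the extra ingredient the paper cites.
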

\begin{proof}[Proof of Lemma~\ref{lemma4}] Since $X_\gamma$ is a Killing field on $\t$, $\Delta f_\gamma= \lambda\,f_\gamma$. Similarly, $\Delta f_\vartheta= \lambda\,f_\vartheta$. Then the lemma can be proved using Lemma\link\ref{lemma3} and again \eqref{eq-nabla-V-tutti}, \eqref{eq-Deltabar-V-tutti} together with 
\eqref{eq:definition-laplace}.
\end{proof}
Our first key result is:
\begin{proposition}\label{proposizione-I-esplicito-toro-sfera} Assume that $f\in C^{\infty}\left ( \t \right )$ is an eigenfunction of $\Delta$ with eigenvalue $\lambda$. Then
\begin{equation}\label{eq-I2-fV-tutti}
\begin{array}{llll}
{\rm (i)}&I_2 \left ( f\, V_\gamma \right )&=&\left[\lambda(4+\lambda)f-48 f_{\gamma \gamma} \right ]V_\gamma + 16 f_{\gamma \vartheta} V_\vartheta   \\
&&&+ \left [8 \sqrt 2 (2+\lambda) f_\gamma \right ]V_\nu+ 8 \lambda f_\gamma \,V_\eta \,;\\
&&&\\
{\rm (ii)}&I_2 \left ( f\, V_\vartheta \right )&=& 16 f_{\gamma \vartheta}V_\gamma+ \left[\lambda(4+\lambda)f-48 f_{\vartheta \vartheta} \right ] V_\vartheta   \\
&&&- \left [8 \sqrt 2 (2+\lambda) f_\vartheta \right ]V_\nu+ 8 \lambda f_\vartheta \,V_\eta \,;\\
&&&\\
{\rm (iii)}&I_2 \left ( f\, V_\nu \right )&=& -\left [8 \sqrt 2 (2+\lambda)f_\gamma \right ]V_\gamma+\left [8 \sqrt 2 (2+\lambda)f_\vartheta \right ]V_\vartheta \\
&&&+\left [\lambda (12+\lambda)f \right ]V_\nu+\left [-16 \sqrt 2 f_{\gamma \gamma}+16 \sqrt 2 f_{\vartheta \vartheta} \right ]V_\eta\,;\\
&&&\\
{\rm (iv)}&I_2 \left ( f\, V_\eta \right )&=&   -8 \lambda f_\gamma \,V_\gamma-8 \lambda f_\vartheta \,V_\vartheta \\
&&&+\left [-16 \sqrt 2 f_{\gamma \gamma}+16 \sqrt 2 f_{\vartheta \vartheta} \right ]V_\nu+
\left [(\lambda^2+4\lambda-16)f \right ]V_\eta\,.\\
\end{array}
\end{equation}
\end{proposition}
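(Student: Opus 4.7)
The strategy is a direct, case-by-case application of formula \eqref{I2-general-case} to $V = fV_\alpha$, $\alpha \in \{\gamma,\vartheta,\nu,\eta\}$, using the geometric data already assembled. The first step is to record the global quantities of $\Phi$ that enter \eqref{I2-general-case} as coefficients. Since $\t$ is flat and $\{X_\gamma, X_\vartheta\}$ is a parallel orthonormal frame, \eqref{eq-nabla-V-tutti} yields
\[
\tau(\Phi) \;=\; \nabla^\Phi_{X_\gamma} V_\gamma + \nabla^\Phi_{X_\vartheta} V_\vartheta \;=\; -2\,V_\eta,
\]
together with $|d\Phi|^2 = 2$, $|\tau(\Phi)|^2 = 4$, $\nabla^\Phi_{X_\gamma}\tau(\Phi) = -2V_\gamma$, and $\nabla^\Phi_{X_\vartheta}\tau(\Phi) = -2V_\vartheta$.

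Next, using the orthonormality of $\{V_\gamma, V_\vartheta, V_\nu, V_\eta\}$ and the fact that the image of $d\Phi$ lies in $\mathrm{span}\{V_\gamma, V_\vartheta\}$, the ``algebraic'' summands of \eqref{I2-general-case} (those not involving derivatives of $V$) reduce immediately to clean multiples of $fV_\alpha$, plus additional $fV_\gamma$, $fV_\vartheta$, or $fV_\eta$ contributions coming from the terms that involve $d\tau(\Phi)$ or $\tau(\Phi)$. Concretely, for $\alpha \in \{\gamma,\vartheta\}$ the expression $\mathrm{trace}\,\langle fV_\alpha, d\Phi\cdot\rangle\,d\Phi\cdot$ collapses to $fV_\alpha$, while for $\alpha \in \{\nu,\eta\}$ it vanishes; analogous reductions dispose of the double-trace term and of $\langle \tau(\Phi),V\rangle\tau(\Phi)$. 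The summands $\overline{\Delta}^2(fV_\alpha)$, $\overline{\Delta}(fV_\alpha)$, and $\mathrm{trace}\,\langle d\Phi\cdot, \overline{\Delta}(fV_\alpha)\rangle\,d\Phi\cdot$ are then read off directly from Lemmas~\ref{lemma3} and~\ref{lemma4}, the last one by projecting onto the tangential components $V_\gamma, V_\vartheta$.

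The only remaining ingredient is $\nabla^\Phi V$: a short computation with \eqref{eq-nabla-V-tutti} gives, for $V = fV_\gamma$,
\[
\nabla^\Phi_{X_\gamma}(fV_\gamma) = 2f_\gamma V_\gamma + f(-\sqrt 2\, V_\nu - V_\eta), \qquad \nabla^\Phi_{X_\vartheta}(fV_\gamma) = 2f_\vartheta V_\gamma,
\]
and analogous formulas for the other three choices of $\alpha$. These feed into the summands $\mathrm{trace}\,\langle \tau(\Phi),dV\cdot\rangle\,d\Phi\cdot$ and $\langle dV, d\Phi\rangle\tau(\Phi)$, and are responsible for the coefficients of $f_\gamma$ and $f_\vartheta$ on the right-hand sides of \eqref{eq-I2-fV-tutti}. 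Adding up the thirteen contributions and invoking $\Delta f = \lambda f$ (hence $\Delta f_\gamma = \lambda f_\gamma$, $\Delta f_\vartheta = \lambda f_\vartheta$, since $X_\gamma, X_\vartheta$ are Killing) then delivers each of the four identities.

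The main obstacle is not conceptual but organizational: one must carefully track signs and the four orthogonal components $V_\gamma, V_\vartheta, V_\nu, V_\eta$ across many summands, and verify that the tangential contributions combine cleanly into the stated coefficients $\lambda(4+\lambda)$ (cases (i)--(ii)), $\lambda(12+\lambda)$ (case (iii)) and $\lambda^2 + 4\lambda - 16$ (case (iv)). Proceeding case by case and collecting the result in the basis $\{V_\gamma, V_\vartheta, V_\nu, V_\eta\}$ is the most transparent route.
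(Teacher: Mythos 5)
Your proposal is correct and follows essentially the same route as the paper: a term-by-term evaluation of the thirteen summands in \eqref{I2-general-case} for $V=fV_\alpha$, using $\tau(\Phi)=-2V_\eta$, $|d\Phi|^2=2$, the covariant derivative table \eqref{eq-nabla-V-tutti}, and Lemmas~\ref{lemma3} and~\ref{lemma4}, with the Killing property of $X_\gamma, X_\vartheta$ guaranteeing $\Delta f_\gamma=\lambda f_\gamma$, $\Delta f_\vartheta=\lambda f_\vartheta$. The only difference is organizational: the paper writes out all thirteen contributions explicitly for case (i) before summing, whereas you group them by type, but the computations invoked are identical.
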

\begin{proof} The proof of Proposition\link\ref{proposizione-I-esplicito-toro-sfera} amounts to computing all the $13$ terms which appear in the right-hand side of formula \eqref{I2-general-case} and adding them up. 

In this proposition we only have to deal with sections of $\Phi^{-1}T\s^4$ which are of the type $V=f V^*$, where $f$ is an eigenfunction of $\Delta$ on the torus and $V^*$ is one amongst the $4$ vector fields defined in \eqref{eq-def-V-vari}. 

We use Roman numbers to denote the $13$ terms in \eqref{I2-general-case} and now we show how to compute each of them.

\textbf{Term I.} It is of the type $\overline{\Delta}^2 (fV^*)$ and was computed in Lemma\link\ref{lemma4}.

\textbf{Term II.} Since $\Phi$ is an isometric immersion from a $2$-dimensional domain, we have $|d\Phi|^2=2$. Therefore,
\[
\overline{\Delta}\left ( {\rm trace}\langle (fV^*),d\Phi \cdot \rangle d\Phi \cdot - |d\Phi|^2\,(fV^*) \right)=\overline{\Delta}\left (f\langle V^*,V_\gamma\rangle V_\gamma\right)+\overline{\Delta}\left (f \langle V^*,V_\vartheta\rangle V_\vartheta\right)-2\overline{\Delta} (f V^* ) \,.
\]
Since each of the two scalar products $\langle V^*,V_\gamma\rangle$, $\langle V^*,V_\vartheta\rangle$ is either equal to $0$ or to $1$, we conclude readily that term II can now be computed using directly Lemma\link\ref{lemma3}.

\textbf{Term III.} First, we observe that, since $\Phi=i \circ \varphi$
\begin{equation*}\label{eq-tau-Phi}
\tau(\Phi)=d i(\tau (\varphi))+ {\rm trace}\nabla d i (d \varphi \cdot,d \varphi \cdot)= - 2 V_\eta \,,
\end{equation*}
where we used the fact that $\varphi$ is minimal and the second fundamental form of the small hypersphere $\s^3(1\slash \sqrt 2)$ in $\s^4$ is $B(X,Y)= -\langle X,Y \rangle \eta$, with $\eta|_{\Phi}=V_\eta$.

Then
\[
2\langle d\tau(\Phi),d\Phi \rangle (fV^*)=-4 \langle \nabla^{\Phi}_{X_\gamma} V_\eta,V_\gamma \rangle (fV^*)-4\langle \nabla^{\Phi}_{X_\vartheta} V_\eta,V_\vartheta \rangle (fV^*)=-8 f V^*\,,
\]
where the last equality is an immediate consequence of \eqref{eq-nabla-V-tutti}. 

\textbf{Term IV.} $|\tau(\Phi)|^2 (fV^*)=4 (fV^*)$.

\textbf{Term V.}
\begin{eqnarray*}
-2\,{\rm trace}\langle V,d\tau(\Phi) \cdot \rangle d\Phi \cdot&=&4f\langle V^*,\nabla^{\Phi}_{X_\gamma} V_\eta \rangle V_\gamma+4f\langle V^*,\nabla^{\Phi}_{X_\vartheta} V_\eta \rangle V_\vartheta\\
&=&4f\langle V^*, V_\gamma \rangle V_\gamma+4f\langle V^*,V_\vartheta \rangle V_\vartheta\,.
\end{eqnarray*}
\textbf{Term VI.}
\[
-2 \,{\rm trace} \langle \tau(\Phi),d(fV^*) \cdot \rangle d\Phi \cdot =4  \langle V_\eta,2 f_\gamma V^*+f \nabla^{\Phi}_{X_\gamma} V^* \rangle V_\gamma+ 
4  \langle V_\eta,2 f_\vartheta V^*+f \nabla^{\Phi}_{X_\vartheta} V^* \rangle V_\vartheta\,.
\]
Also this computation can now be ended easily using \eqref{eq-nabla-V-tutti}.

\textbf{Term VII.} $- \langle \tau(\Phi),(fV^*) \rangle \tau(\Phi)=-4 f\langle V_\eta,V^* \rangle V_\eta \,.$

\textbf{Term VIII.}
\[
{\rm trace} \langle d\Phi \cdot,\overline{\Delta}(fV^*) \rangle d\Phi \cdot =
\langle V_\gamma,\overline{\Delta}(fV^*) \rangle V_\gamma +\langle V_\vartheta,\overline{\Delta}(fV^*) \rangle V_\vartheta \,.
\]
So this term can be calculated using Lemma\link\ref{lemma3}.

\textbf{Term IX.} Since $\left \{V_\gamma, V_\vartheta \right \}$ are orthonormal at each point we easily find:
\[
{\rm trace}\langle d\Phi \cdot,\left ( {\rm trace}\langle
(fV^*),d\Phi \cdot \rangle d\Phi \cdot \right ) \rangle d\Phi \cdot= f\langle
V^*,V_\gamma \rangle V_\gamma+f\langle
V^*,V_\vartheta \rangle V_\vartheta\,.
\]
\textbf{Term X.}
\[
-2 |d\Phi|^2\, {\rm trace}\langle d\Phi \cdot,(fV^*) \rangle d\Phi \cdot=-4f\langle V_\gamma,V^*\rangle V_\gamma -4f\langle V_\vartheta,V^*\rangle V_\vartheta \,.
\]
\textbf{Term XI.}
\[
2 \langle d(fV^*),d\Phi\rangle \tau(\Phi)=-4\Big [2 f_\gamma \langle V^*,V_\gamma \rangle + f \langle \nabla^{\Phi}_{X_\gamma}V^*,V_\gamma \rangle 
+2 f_\vartheta \langle V^*,V_\vartheta \rangle+ f \langle \nabla^{\Phi}_{X_\vartheta}V^*,V_\vartheta \rangle \Big ]V_\eta\,. 
\]
Therefore the calculation of this term ends easily using \eqref{eq-nabla-V-tutti}.

\textbf{Term XII.}
\[
-|d\Phi|^2 \,\overline{\Delta}(fV^*)=-2 \overline{\Delta}(fV^*)\,.
\]
This computation was performed in Lemma\link\ref{lemma3}.

\textbf{Term XIII.} $|d\Phi|^4 (fV^*)=4fV^*\,.$

\vspace{2mm}

Now we are in the right position to complete the proof of the proposition. As for 
\eqref{eq-I2-fV-tutti}(i), we follow the lines of computation which we have just described and we obtain the $13$ terms I--XIII in the case that $V=f V_\gamma$. The result is 
\begin{eqnarray*}
 {\rm I}&=& \left[ (\lambda+3)^2 f -48 f_{\gamma \gamma}\right ]V_\gamma + \left [ 16f_{\gamma_\vartheta}\right ]V_\vartheta\\
 &&+ \left [ 4 \sqrt 2 (2\lambda+7)f_\gamma \right ]V_\nu +\left [ 4  (2\lambda+5)f_\gamma \right ]V_\eta\\
 {\rm II}&=&-\left [(\lambda+3)f\, V_\gamma+4 \sqrt 2 \,f_\gamma \,V_\nu+4 f_\gamma\,V_\eta \right ] \\
{\rm III}&=&  -8 f V_\gamma \\
 {\rm IV}&=&  4 f V_\gamma \\
 {\rm V}&=&  4 f V_\gamma \\
 {\rm VI}&=& - 4 f V_\gamma \\
 {\rm VII}&=& 0\\
 {\rm VIII}&=&(\lambda+3)  f V_\gamma  \\
 {\rm IX}&=&   f V_\gamma \\
 {\rm X}&=& - 4 f V_\gamma \\
 {\rm XI}&=&-8 f_\gamma V_\eta\\
 {\rm XII}&=&-2\left [(\lambda+3)f\, V_\gamma+4 \sqrt 2 \,f_\gamma \,V_\nu+4 f_\gamma\,V_\eta \right ] \\
 {\rm XIII}&=& 4 f V_\gamma \,.
 \end{eqnarray*} 
Adding up all these $13$ terms and simplifying we obtain \eqref{eq-I2-fV-tutti}(i). The proof of \eqref{eq-I2-fV-tutti}(ii)--(iv) is analogous and so we omit further details.
\end{proof}

\begin{proof}[Proof of Theorem\link\ref{Th-Main}] We recall that the Laplace operator $\Delta$ on $\t$ is given in \eqref{eq:definition-laplace} and we denote by $\lambda_i,\,i \in \n$, its spectrum. 

We know that the eigenvalues of $\Delta$ have the form $\lambda_i=4(m^2+n^2)$, where $m,n \geq 0$. Then it is convenient to define
\begin{eqnarray}\label{sottospazi-S-lambda}
 S^{\lambda_i}&=&\left \{ f_1V_\gamma : \Delta f_1= \lambda_i f_1 \right \} \oplus \left \{ f_2V_\vartheta : \Delta f_2= \lambda_i f_2 \right \}\nonumber\\
 &&\oplus \left \{ f_3V_\nu :\Delta f_3= \lambda_i f_3 \right \}
 \oplus \left \{ f_4V_\eta : \Delta f_4= \lambda_i f_4 \right \}
 \end{eqnarray}
As in \cite{LO},  $S^{\lambda_i} \perp S^{\lambda_j}$ if $i \neq j$ and $\oplus_{i=0}^{+\infty}\, S^{\lambda_i}$ is dense in
$\mathcal{C}\left( \Phi^{-1}T\s^4\right )$ (note that the scalar product which we use on sections of $ \Phi^{-1}T\s^4$ is the standard $L^2$-inner product). Moreover, using the explicit description \eqref{sottospazi-S-lambda}, it is easy to deduce from Proposition\link\ref{proposizione-I-esplicito-toro-sfera} that \textit{the operator $I_2$ preserves each of the $ S^{\lambda_i}$.} Indeed, this is a consequence of the fact that, if $f$ is an eigenfunction of $\Delta$ with eigenvalue $\lambda$, then the same is true for all its partial derivatives with respect to $\gamma,\, \vartheta$ because $\partial \slash \partial \gamma$ and $\partial \slash \partial \vartheta$ are Killing vector fields on the torus. 

By way of summary, we can compute the biharmonic index and nullity of $I_2$ restricted to each of the $ S^{\lambda_i}$'s and then add up the results to complete the proof of Theorem\link\ref{Th-Main}. 

First, let us examine the eigenvalue $\lambda_0=0$. We have  
\begin{equation*}\label{sottospazi-S-lambda-0}
 S^{\lambda_0}=\left \{ c_1\,V_\gamma \,\,:\,\, c_1 \in \R\right \}\oplus \left \{ c_2\,V_\vartheta \,\,:\,\, c_2\in \R\right \}\oplus \left \{ c_3\,V_\nu \,\,:\,\, c_3\in \R\right \} \oplus \left \{ c_4\,V_\eta \,\,:\,\, c_4\in \R\right \}
 \end{equation*}
and $\dim \left( S^{\lambda_0} \right)=4$. It follows by a direct application of Proposition\link\ref{proposizione-I-esplicito-toro-sfera} that the restriction of $I_2$ to $S^{\lambda_0}$ gives rise to the eigenvalues $\mu_1=- 16$ with multiplicity $1$ (eigenvector $V_\eta$), and $\mu_2=0$ with multiplicity equal to $3$ (eigenvectors $\left \{V_\gamma, V_\vartheta, V_\nu \right \}$). 

Then, we conclude that the contributions of this subspace to Index$_2(\Phi)$ and Null$_2(\Phi)$ are respectively $1$ and $3$.

Next, let us consider the case that $\lambda>0$. In this case, it is difficult to describe explicitly all the couples $(m,n)$ such $\lambda=4(m^2+n^2 )$ and so we proceed introducing a further, more suitable, decomposition.
 
More precisely, let us denote by $W_{\lambda}$ the corresponding eigenspace. In a similar fashion to \cite{BFO}, we decompose
\begin{equation}\label{sottospazi-W-lambda}
W_{\lambda}=W^{m,0}\oplus_{m,n \geq 1}W^{m,n}\oplus W^{0,n}\,,
 \end{equation}
where it is understood that in \eqref{sottospazi-W-lambda} we have to consider all the possible couples $(m,n)\in \n \times \n$ such that $\lambda=4(m^2+n^2)$. 
The subspaces of the type $W^{m,0}$ are $2$-dimensional and are spanned by the functions $\left \{\cos (m\gamma),\sin (m\gamma) \right \}$. Similarly, $W^{0,n}$ is $2$-dimensional and is generated by $\left \{\cos (n\vartheta),\sin (n\vartheta) \right \}$. Finally, the subspaces $W^{m,n}$, with $m,n \geq 1$, have dimension $4$ and are spanned by
\[
\left \{\cos (m\gamma)\cos(n\vartheta),\cos (m\gamma)\sin (n\vartheta),\sin (m\gamma)\cos(n\vartheta),\sin (m\gamma)\sin (n\vartheta)\right \} \,.
\]
Now it becomes natural to define
\begin{eqnarray*}\label{sottospazi-S-m,n}
 S^{m,n}&=&\left \{ f_1 V_\gamma : f_1 \in W^{m,n} \right \} \oplus \left \{ f_2 V_\vartheta : f_2 \in W^{m,n} \right \}\\
 &&\oplus \left \{ f_3 V_\nu : f_3 \in W^{m,n} \right \} \oplus \left \{ f_4 V_\eta : f_4 \in W^{m,n} \right \} \,.
 \end{eqnarray*}
All these subspaces are orthogonal to each other. Moreover, for any positive eigenvalue $\lambda_i$, we have
\begin{equation*}\label{decomposizioneS-lambda-in-coppie-m-n}
 S^{\lambda_i}= \oplus_{4(m^2+n^2)=\lambda_i}\,\,S^{m,n}\,.
 \end{equation*}
It follows easily from Proposition\link\ref{proposizione-I-esplicito-toro-sfera} that the operator $I_2$ preserves each of the subspaces $S^{m,n}$. Therefore, its spectrum can be computed by determing the eigenvalues of the matrices associated to the restriction of $I_2$ to each of the $S^{m,n}\,$'s. We shall see that $\dim \left ( S^{m,n}\right )$ is either $8$ or $16$ and so this approach enable us to provide a rather unified treatment, while the direct study of the $S^{\lambda_i}$'s would be much trickier. 

We point out that, in the rest of the proof, there are several long and tedious computations which can be conveniently carried out using a suitable software (we used Mathematica$^{\footnotesize \textregistered}$). 

We separate three cases:

\textbf{Case 1:} $S^{m,0}$, $m \geq1$.

In this case $\dim \left (S^{m,0} \right )=8$ and an orthonormal basis $\{e_i \}_{i=1 \ldots 8}$ of $S^{m,0}$ is given by:
\begin{eqnarray}\label{eq-ort-base-Sm0}
\Big \{ \frac{\sqrt{2}\cos (m\gamma)}{\pi}\,V_\gamma ,\frac{\sqrt{2}\sin (m\gamma)}{ \pi}\,V_\gamma,\frac{\sqrt{2}\cos (m\gamma)}{\pi} 
 V_\vartheta,\frac{\sqrt{2}\sin (m\gamma)}{\pi}\,V_\vartheta\\\nonumber
\frac{\sqrt{2}\cos (m\gamma)}{\pi}\,V_\nu ,\frac{\sqrt{2}\sin (m\gamma)}{\pi}\,V_\nu ,
 \frac{\sqrt{2}\cos (m\gamma)}{\pi}\,V_\eta,\frac{\sqrt{2}\sin (m\gamma)}{\pi}\,V_\eta \Big \} \,.
\end{eqnarray}
Using Proposition\link\ref{proposizione-I-esplicito-toro-sfera} with $\lambda=4 m^2$ and computing we find that the $(8\times 8)$-matrices which describe the operator $I_2$ with respect to the basis \eqref{eq-ort-base-Sm0}, i.e. $\left( I_2(e_i),e_j \right )$, are:
\begin{equation*}\label{matrici-S-m-0}
\left(
\begin{array}{cccc}
 16 m^2 \left(m^2+4\right) & 0 & 0 & 0 \\
 0 & 16 m^2 \left(m^2+4\right) & 0 & 0  \\
 0 & 0 & 16 \left(m^4+m^2\right) & 0  \\
 0 & 0 & 0 & 16 \left(m^4+m^2\right) \\
 0 & 8 \sqrt{2} m \left(4 m^2+2\right) & 0 & 0 \\
 -8 \sqrt{2} m \left(4 m^2+2\right) & 0 & 0 & 0  \\
 0 & 32 m^3 & 0 & 0  \\
 -32 m^3 & 0 & 0 & 0 
\end{array}
\right.
\end{equation*}
\begin{equation*}
\left.
\begin{array}{cccc}
  0 & -8 \sqrt{2} m
   \left(4 m^2+2\right) & 0 & -32 m^3 \\
  8 \sqrt{2} m
   \left(4 m^2+2\right) & 0 & 32 m^3 & 0 \\
  0 & 0 & 0 & 0 \\
  0 & 0 & 0 & 0 \\
  16 m^2
   \left(m^2+3\right) & 0 & 16 \sqrt 2 m^2 & 0 \\
 0 & 16 m^2
   \left(m^2+3\right) & 0 & 16 \sqrt 2 m^2 \\
  16 \sqrt 2 m^2 & 0 & 16 \left(m^4+m^2-1\right)
   & 0 \\
 0 & 16 \sqrt 2 m^2 & 0 & 16
   \left(m^4+m^2-1\right) \\
\end{array}
\right)
\end{equation*}
The characteristic polynomial is
\begin{equation}\label{pol-car8x8}
P(x)=\left (x-16(m^2+m^4)\right )^2 \,\big (P_3(x) \big )^2 \,,
\end{equation}
where
\begin{equation*}\label{eq-char-pol-6-Sm0}
P_3(x)=a_0+a_1 x+ a_2 x^2 + a_3 x^3 
\end{equation*}
and its coefficients are the following:
\begin{eqnarray*}\label{eq-coeff-pol-6-S-m-0}
a_0&=&-4096  m^2 (m-1)(m+1) (m^8- 3 m^6+ m^4+ 4 m^2-2 )\\\nonumber
a_1&=&256 m^2 (3 m^6+ 4 m^4+ 7 m^2  -9)\\\nonumber
a_2&=&-16 ( 3 m^4+ 8 m^2 -1 )\\\nonumber
a_3&=&1 \,.\nonumber
\end{eqnarray*}
Because all the roots of $P_3(x)$ are real, according to the Descartes rule we can conclude that the third order polynomial $P_3(x)$ possesses three positive roots provided that
\begin{equation}\label{eq-cartesio-regola-Sm-0}
a_0 <0 \,, \quad a_1>0 \,, \quad a_2 <0\,, \quad a_3 >0\,.
\end{equation}
Now, it is easy to check that \eqref{eq-cartesio-regola-Sm-0} is satisfied provided that $m \geq 2$.

By contrast, when $m=1$ we have $a_0=0$. In this case, it is easy to conclude that $P_3(x)$ has two positive roots and one root equal to $0$, with multiplicity $2$ as a root of the characteristic polynomial $P(x)$ in \eqref{pol-car8x8}. 

In summary, we have proved that the contributions of the subspaces $S^{m,0}$, $m \geq1$, to Null$_2(\Phi)$ and Index$_2(\Phi)$ are respectively $2$ and $0$.

\textbf{Case 2:} $S^{0,n}$, $n \geq1$.

Because of the symmetry of the map $\Phi$, the contribution of the subspaces $S^{0,n}$, $n \geq1$, to Null$_2(\Phi)$ and Index$_2(\Phi)$ is precisely as in Case $1$ above, i.e., $2$ for the nullity and $0$ for the index.

\textbf{Case 3:} $S^{m,n}$, $m,n \geq1$.

This is the case which requires the biggest computational effort. In this case $\dim \left (S^{m,n} \right )=16$ and an orthonormal basis $\{e_i \}_{i=1 \ldots 16}$ of $S^{m,n}$ is given by:
\begin{eqnarray*}
\Big \{ \frac{2}{\pi}\cos (m\gamma)\cos (n\vartheta)V_\gamma ,\frac{2}{\pi}\cos (m\gamma)\sin (n\vartheta)V_\gamma,\frac{2}{\pi}\sin (m\gamma)\cos (n\vartheta)V_\gamma,\frac{2}{\pi}\sin (m\gamma)\sin (n\vartheta)V_\gamma,\\
 \frac{2}{\pi}\cos (m\gamma)\cos (n\vartheta)V_\vartheta ,\frac{2}{\pi}\cos (m\gamma)\sin (n\vartheta)V_\vartheta,\frac{2}{\pi}\sin (m\gamma)\cos (n\vartheta)V_\vartheta,\frac{2}{\pi}\sin (m\gamma)\sin (n\vartheta)V_\vartheta,\\
 \frac{2}{\pi}\cos (m\gamma)\cos (n\vartheta)V_\nu ,\frac{2}{\pi}\cos (m\gamma)\sin (n\vartheta)V_\nu,\frac{2}{\pi}\sin (m\gamma)\cos (n\vartheta)V_\nu,\frac{2}{\pi}\sin (m\gamma)\sin (n\vartheta)V_\nu,\\
\frac{2}{\pi}\cos (m\gamma)\cos (n\vartheta)V_\eta ,\frac{2}{\pi}\cos (m\gamma)\sin (n\vartheta)V_\eta,\frac{2}{\pi}\sin (m\gamma)\cos (n\vartheta)V_\eta,\frac{2}{\pi}\sin (m\gamma)\sin (n\vartheta)V_\eta \Big \}\,.
\end{eqnarray*}
As an application of Proposition\link\ref{proposizione-I-esplicito-toro-sfera} with $\lambda=4(m^2+n^2)$, with the aid of Mathematica$^{\footnotesize \textregistered}$ we compute the $(16\times 16)$-matrices $\left ( I_2(e_i),e_j \right )$ and find that their characteristic polynomial is
\[
P(x)= \big [c_0+c_1 x+c_2 x^2 +c_3 x^3 + c_4 x^4\big ]^4 = \big [ Q_4(x) \big ]^4\,,
\]
where the coefficients of the fourth order polynomial $Q_4(x)$ are:
\begin{eqnarray*}
c_0&=&65536 \Big [m^{16} + n^{16} + 8 ( m^{14} n^2 + m^2 n^{14}) + 28 (m^{12} n^4 + m^4 n^{12}) + 
 56 (m^{10} n^6 + m^6 n^{10}) \\&&+ 70 m^8 n^8 - 3 (m^{14} + n^{14}) - 
 21 ( m^{12} n^2 + m^2 n^{12}) - 63 ( m^{10} n^4 + m^4 n^{10}) \\&&- 
 105 ( m^8 n^6 + m^6 n^8) + 16 ( m^{10} n^2 + m^2 n^{10}) + 
 64 (m^8 n^4 + m^4 n^8) + 96 m^6 n^6 \\&&+ 7 (m^{10} + n^{10}) - 
 21 ( m^8 n^2 + m^2 n^8) - 98 (m^6 n^4 + m^4 n^6) - 3 ( m^8 + n^8) \\&&+ 
 12 (m^6 n^2 + m^2 n^6) + 94 m^4 n^4 - 4 ( m^6 + n^6) - 
 12 ( m^4 n^2 + m^2 n^4) \\&&+ 2 (m^4 + n^4) + 12 m^2 n^2\Big ] 
 \end{eqnarray*}
 \begin{eqnarray*}
c_1&=& -4096 \Big[4 (m^{12} + n^{12}) + 24 (m^{10} n^2 + m^2 n^{10}) + 
 60 ( m^8 n^4 + m^4 n^8) + 80 m^6 n^6  \\&&+ 3 ( m^{10} + n^{10}) + 
 15 ( m^8 n^2 + m^2 n^8) + 30 ( m^6 n^4 + m^4 n^6) + 
 15 ( m^8 + n^8)  \\&&- 36 ( m^6 n^2 + m^2 n^6) - 
 102 m^4 n^4 + m^6 + n^6 + 11 ( m^4 n^2 + m^2 n^4) - 
 15 ( m^4 + n^4)  \\&&- 46 m^2 n^2 + 2 (m^2 + n^2)\Big ]
  \end{eqnarray*}
 \begin{eqnarray*}
c_2&=&256 \Big [6 ( m^8 + n^8) + 24 ( m^6 n^2 + m^2 n^6) + 36 m^4 n^4 + 
 15 (m^6 + n^6) \\&&+ 45 ( m^4 n^2 + m^2 n^4)  + 14 ( m^4 + n^4) + 
 44 m^2 n^2 - 10 ( m^2 + n^2)\Big ] \\
c_3&=&-16 \big[4 (m^4 + n^4) + 8 m^2 n^2 + 9 (m^2 + n^2) - 1\big ] \\
c_4&=&1 \,. 
\end{eqnarray*}
Next, with the methods used in \cite{MOR3}, it is not difficult to show that, if $m \geq 2,n \geq 1$ or $m \geq 1,n \geq 2$, then
\[
c_0 >0\, , \quad  \quad c_1 < 0\,,\quad c_2 > 0 \,,\quad  c_3 <0 \,, \quad c_4>0\,.
\]
Because all the roots of $Q_4(x)$ are real, we conclude that the fourth order polynomial $Q_4(x)$ admits $4$ positive roots in these cases.

By contrast, when $m=n=1$ we have:
\[
c_0 =0\, , \quad  \quad c_1 < 0\,,\quad c_2 > 0 \,,\quad  c_3 <0 \,, \quad c_4>0\,.
\]
It follows easily that, in this case, $Q_4(x)$ has $3$ positive roots and one root equal to $0$. This last root corresponds to the zero eigenvalue for $I_2$, with multiplicity equal to $4$.

In summary, we have proved that the contributions of the subspaces $S^{m,n}$, $m ,n\geq1$, to Null$_2(\Phi)$ and Index$_2(\Phi)$ are respectively $4$ and $0$ and this ends Case 3.

Adding up the results of $S^{\lambda_0}$, $\lambda_0=0$, with those of Cases 1,2,3 we conclude that
\[
{\rm Null}_2(\Phi)=3+2+2+4=11\,, \quad {\rm Index}_2(\Phi)=1+0+0+0=1
\]
and so the proof of Theorem\link\ref{Th-Main} is completed.
\end{proof}

\section{The study of Ker$(I_2)$}\label{section-null}
Let $\phi : (M,g) \to (N,h)$ be a biharmonic map. First, we recall a basic fact about Ker$(I_2)$, that is: {\em  if $\{\phi_t\}$ is a variation of $\phi$ by means of biharmonic maps, then $V= \left . \frac{d}{dt}\right |_{t=0} \phi_{t}$ belongs to Ker$(I_2)$}. In fact,  for an arbitrary $W\in {\mathcal C}(\phi^{-1}TN)$, we can consider a variation $\{W_t\}$ such that $W_0=W$ and $W_t\in {\mathcal C}(\phi^{-1}_tTN)$. Then we define the two parameter variation of $\phi$  by
\[
\phi_{t,s}(x) = {\rm exp}_{\phi_t(x)}(sW_t(x))\,.
\]
With respect to $\{\phi_{t,s}\}$ we have
\[
V= \left . \frac{d}{dt}\right |_{t=0} \phi_{t,0}\,,  \qquad
W= \left . \frac{d}{ds}\right |_{s=0} \phi_{0,s} \,.
\]
As $\phi_t$ is biharmonic for all $t$, 
\[
\left . \frac{\partial}{\partial  s}\right |_{s=0}  E_2 (\phi_{t,s})=0
\]
and consequently
\[
\left . \frac{\partial^2}{\partial t \partial s}\right |_{(t,s)=(0,0)}  E_2 (\phi_{t,s})=(I_2(V),W)=0\,.
\]
We conclude that $V$ belongs to Ker$(I_2)$.
In particular,  if $\{\phi_t\}$ is given by composing $\phi$ with a one parameter family of isometries of the domain or the codomain, the above properties imply 
\[
\{d\phi(X)\colon X\in {\mathcal C}(TM), X \text{ is Killing}\}\subset {\rm Ker}(I_2)
\]
and
\[
\{Z\circ \phi \colon Z\in {\mathcal C}(TN), Z \text{ is Killing}\}\subset {\rm Ker}(I_2)\,.
\]

In this section we show that Ker$(I_2)$ is the orthogonal sum of a $10$-dimensional subspace spanned by Killing vector fields as above and a $1$-dimensional subspace spanned by $V_\nu$. 

It is easy to describe the space of Killing vector fields on $\t$: it has dimension $2$ and it is spanned by $\{X_\gamma, X_\vartheta \}$. 

As for the target, we know that the space of Killing vector fields on $\s^n$ has dimension $n(n+1)\slash 2$. In particular, a base for this subspace of $\mathcal{C}\left(T\s^4\right)$ can be obtained by restriction of the following $10$ vector fields on $\R^5$: 
\begin{equation*}\label{Killing-fields-on-R5}
\begin{array}{ll}
Z_1= \left (-y^2,y^1,0,0,0 \right ) \,;\quad &Z_2= \left (0,0,-y^4,y^3,0 \right ) \,; \\
Z_3= \left (-y^4,0,0,y^1,0 \right ) \,;\quad &Z_4= \left (0,-y^3,y^2,0,0 \right ) \,; \\
Z_5= \left (-y^3,0,y^1,0,0 \right ) \,;\quad &Z_6= \left (0,-y^4,0,y^2,0 \right ) \,;\\
Z_7= \left (0,-y^5,0,0,y^2 \right ) \,;\quad &Z_8= \left (-y^5,0,0,0,y^1 \right ) \,; \\
Z_9= \left (0,0,0,-y^5,y^4 \right ) \,;\quad &Z_{10}= \left (0,0,-y^5,0,y^3 \right ) .
\end{array}
\end{equation*}
Then, we define vector fields $V_i\in \mathcal{C}\left(\Phi^{-1}T\s^4\right)$ as follows:
\begin{equation}\label{Killing-along-Phi}
V_i(P)=Z_i(\Phi(P))\,, \qquad i=1, \dots 10 \,.
\end{equation}
Now, we are in the right position to state the main result of this section.
\begin{theorem}\label{Th-Killing}
\begin{equation}\label{eq-spezz-nullity}
{\rm Ker}(I_2)= W^{(10)} \oplus W^{(1)}\,,
\end{equation}
where $W^{(10)}$ is a $10$-dimensional subspace spanned by the vector fields $V_i\in \mathcal{C}\left(\Phi^{-1}T\s^4\right)$ defined in \eqref{Killing-along-Phi}, while $\dim W^{(1)}=1$ and $W^{(1)}$ is spanned by $V_\nu$. 
\end{theorem}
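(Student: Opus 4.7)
The plan is to exhibit $11$ linearly independent elements of ${\rm Ker}(I_2)$: ten of them as restrictions of Killing vector fields of the codomain $\s^4$, and the eleventh, namely $V_\nu$, of a different nature. Since Theorem~\ref{Th-Main}(ii) already furnishes ${\rm Null}_2(\Phi)=11$, the dimension count will immediately close \eqref{eq-spezz-nullity}, without any further analytic input.

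First, I would observe that each $V_i$ defined in \eqref{Killing-along-Phi} lies in ${\rm Ker}(I_2)$ by the general principle recalled at the beginning of this section: the one-parameter group of isometries of $\s^4$ generated by $Z_i$ produces a variation of $\Phi$ through biharmonic maps, whose variation vector field is $Z_i\circ\Phi=V_i$. To secure $\dim W^{(10)}=10$, I would show that the restriction map $\mathfrak{so}(5)\to\mathcal{C}(\Phi^{-1}T\s^4)$, $A\mapsto (y\mapsto A y)\circ\Phi$, is injective. Indeed, if $\sum c_iV_i\equiv 0$ on $\t$, then the skew-symmetric matrix $A=\sum c_i Z_i$ (viewed as a linear map on $\R^5$) satisfies $A P=0$ for every $P\in\Phi(\t)$; a direct check using the five values $(\gamma,\vartheta)\in\{(0,0),(\pi/2,0),(\pi,0),(0,\pi/2),(0,\pi)\}$ shows that $\Phi(\t)$ spans $\R^5$, so $A=0$ and all the $c_i$ vanish.

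Next, I would place $V_\nu$ in ${\rm Ker}(I_2)$ by a direct application of Proposition~\ref{proposizione-I-esplicito-toro-sfera}(iii) to the constant function $f\equiv 1$, an eigenfunction of $\Delta$ with eigenvalue $\lambda=0$: all partial derivatives of $f$ vanish and the surviving coefficient $\lambda(12+\lambda)$ is zero, hence $I_2(V_\nu)=0$.

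Finally, one needs to verify that $V_\nu\notin W^{(10)}$. Writing $V_\nu=\sqrt 2\, S y|_{\Phi}$ with $S=\mathrm{diag}(1,1,-1,-1,0)$ symmetric and nonzero, an equality of the form $V_\nu=\sum c_i V_i$ on $\Phi(\t)$ would read $(\sqrt 2\, S-A)y=0$ for every $y\in\Phi(\t)$; by the spanning property used above, this forces $\sqrt 2\, S=A$, an impossible equality between a nonzero symmetric matrix and a skew-symmetric one. Consequently $W^{(10)}\oplus\langle V_\nu\rangle$ is a direct sum of dimension $11$ contained in ${\rm Ker}(I_2)$, and Theorem~\ref{Th-Main}(ii) promotes the inclusion to equality. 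The only possible obstacle in this outline is the elementary linear-algebra check that $\Phi(\t)$ spans $\R^5$; once this is in place, all the real work has already been done in Theorem~\ref{Th-Main}.
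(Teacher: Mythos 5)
Your proposal is correct, and its overall skeleton coincides with the paper's: the ten restricted Killing fields $V_i$ lie in ${\rm Ker}(I_2)$ by the general principle recalled at the start of the section, $V_\nu\in{\rm Ker}(I_2)$ follows from Proposition~\ref{proposizione-I-esplicito-toro-sfera} with $\lambda=0$ (equivalently, from the $S^{\lambda_0}$ analysis in the proof of Theorem~\ref{Th-Main}), and the identity ${\rm Null}_2(\Phi)=11$ turns an $11$-dimensional subspace into all of ${\rm Ker}(I_2)$. Where you diverge is in how the $11$ sections are shown to be independent. The paper computes the explicit expressions \eqref{formule-esplicite-Vi-killing}--\eqref{formule-esplicite-Vi-killing-bis} of the $V_i$ in the frame $\{V_\gamma,V_\vartheta,V_\nu,V_\eta\}$ and verifies the pairwise $L^2$-orthogonality \eqref{ortogonality}, which simultaneously gives linear independence, shows $V_\nu\notin W^{(10)}$, and establishes the stronger \emph{orthogonal} splitting announced in the text before the theorem; these explicit formulas are also reused afterwards (e.g.\ in the remark checking $I_2(V_i)=0$ directly and in identifying which $V_i$ lie in $\mathcal{W}^{\varphi}$ in Section~4). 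Your route replaces the $L^2$ computations by pointwise linear algebra: a vanishing combination $\sum c_iV_i$ corresponds to a skew-symmetric matrix annihilating $\Phi(\t)$, which spans $\R^5$ (your five sample points do span, as one checks by taking differences), so the matrix is zero and the $c_i$ vanish since the $Z_i$ are independent in $\mathfrak{so}(5)$; similarly $V_\nu=\sqrt 2\,Sy|_{\Phi}$ with $S$ symmetric and nonzero cannot agree on a spanning set with any skew-symmetric field, so $V_\nu\notin W^{(10)}$. This is shorter and more conceptual, avoiding the trigonometric expansions entirely; its only cost is that it yields the direct sum \eqref{eq-spezz-nullity} but not the orthogonality of the two summands, which the statement itself does not require but the paper's computation delivers for free.
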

\begin{proof} A computation, using the notations in \eqref{eq-def-V-vari}, shows that:
\begin{eqnarray}\label{formule-esplicite-Vi-killing}
V_1&=&\frac{1}{2} V_\gamma\nonumber \\\nonumber
V_2&=&\frac{1}{2} V_\vartheta \\
V_3&=&\left [\frac{1}{2}\sin \gamma \sin \vartheta\right ] V_\gamma+\left [\frac{1}{2}\cos \gamma \cos \vartheta\right ] V_\vartheta -\left [\frac{1}{\sqrt 2} \cos \gamma \sin \vartheta \right ]V_\nu \\\nonumber
V_4&=&-\left [\frac{1}{2}\cos \gamma \cos \vartheta\right ] V_\gamma-\left [\frac{1}{2}\sin \gamma \sin \vartheta\right ] V_\vartheta-\left [\frac{1}{\sqrt 2}\sin \gamma \cos \vartheta\right ] V_\nu \\\nonumber
V_5&=&\left [\frac{1}{2}\sin \gamma \cos \vartheta\right ] V_\gamma-\left [\frac{1}{2}\cos \gamma \sin \vartheta \right ]V_\vartheta -\left [\frac{1}{\sqrt 2}\cos \gamma \cos \vartheta\right ] V_\nu \\\nonumber
\end{eqnarray}
\begin{eqnarray}\label{formule-esplicite-Vi-killing-bis}
V_6&=&-\left [\frac{1}{2}\cos \gamma \sin \vartheta\right ] V_\gamma+\left [\frac{1}{2}\sin \gamma \cos \vartheta \right ]V_\vartheta -\left [\frac{1}{\sqrt 2}\sin \gamma \sin \vartheta\right ] V_\nu \nonumber\\\nonumber
V_7&=&-\left [\frac{1}{\sqrt 2}\cos \gamma \right ] V_\gamma-\left [\frac{1}{2}\sin \gamma  \right ]V_\nu -\left [\frac{1}{\sqrt 2}\sin \gamma \right ] V_\eta \\
V_8&=&\left [\frac{1}{\sqrt 2}\sin \gamma \right ] V_\gamma-\left [\frac{1}{2}\cos \gamma \right ]V_\nu -\left [\frac{1}{\sqrt 2}\cos \gamma \right ] V_\eta \\\nonumber
V_9&=&-\left [\frac{1}{\sqrt 2}\cos \vartheta \right ] V_\vartheta+\left [\frac{1}{2}\sin \vartheta  \right ]V_\nu -\left [\frac{1}{\sqrt 2}\sin \vartheta \right ] V_\eta \\\nonumber
V_{10}&=&\left [\frac{1}{\sqrt 2}\sin \vartheta \right ] V_\vartheta+\left [\frac{1}{2}\cos \vartheta \right ]V_\nu -\left [\frac{1}{\sqrt 2}\cos \vartheta \right ] V_\eta \,. \nonumber
\end{eqnarray}
From this it is easy to check that, for all $1\leq i,j \leq 10,\, i \neq j$, we have
\begin{equation}\label{ortogonality}
\big( V_i, V_\nu \big )=0  \quad {\rm and}\quad \big( V_i, V_j \big )=0
\end{equation}
and so the $V_j$'s are linearly independent. Therefore, they span a $10$-dimensional subspace $W^{(10)}$ of ${\rm Ker}(I_2)$. Now, the statement \eqref{eq-spezz-nullity} in Theorem\link\ref{Th-Killing} is an immediate consequence of the fact that $ {\rm Null}_2(\Phi)=11$ and $V_\nu \in {\rm Ker}(I_2)$, as shown in the proof of Theorem\link\ref{Th-Main}. 
\end{proof}
\begin{remark}
(i) The contribution to the nullity of $\Phi$ given by the two Killing fields $\{X_\gamma, X_\vartheta \}$ on the domain $\t$ is included in $W^{(10)}$. Indeed, the first two equalities in \eqref{formule-esplicite-Vi-killing} show that both $d\varphi(X_\gamma)=V_\gamma=2 V_1$ and $d\varphi(X_\vartheta)=V_\vartheta=2 V_2$ belong to $W^{(10)}$.

(ii) In the notation of the proof of Theorem\link\ref{Th-Main}, the vector fields $V_3,V_4,V_5,V_6$ belong to $S^{1,1}$. Using Proposition\link\ref{proposizione-I-esplicito-toro-sfera} with $\lambda=8$, together with the explicit equalities \eqref{formule-esplicite-Vi-killing}--\eqref{formule-esplicite-Vi-killing-bis}, it is possible to check directly that $I_2(V_i)=0, \,i=3,\ldots,6$. Similarly, using $\lambda=4$, the same is true for $V_7,V_8$, which belong to $S^{1,0}$, and $V_9,V_{10} \in S^{0,1}$. 
\end{remark}

Next, we shall compute the higher order derivatives for a natural variation $\{\Phi_t\}$ such that $\partial \Phi_t \slash \partial t \big |_{t=0}=V_\nu$, and from this we shall deduce that \eqref{high-order-inst} occurs with $k=4$.
\begin{theorem}
Let $\Phi_t:\t \to \s^4\subset \R^5$ be defined by
\begin{equation}\label{def-Phit-Vnu}
\Phi_t= \frac{1}{\sqrt{1+t^2}}\left (\Phi + t V_\nu\right )\,.
\end{equation}
Then $\partial \Phi_t \slash \partial t \big |_{t=0}=V_\nu$ and we have:
\begin{eqnarray}\label{derivate-E2-Phit-Vnu}
\left. \frac{d^\ell}{dt^\ell}\right |_{t=0} E_2 (\Phi_t) &=&0\,, \quad \ell=1,2,3\,. \\\nonumber
\left. \frac{d^4}{dt^4}\right |_{t=0} E_2 (\Phi_t) &=&-48 \pi^2 <0 \,.
\end{eqnarray}
\end{theorem}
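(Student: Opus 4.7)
The plan is to recast $E_2(\Phi_t)$ as an explicit elementary function of the single variable $t$ by writing $\Phi_t$ in closed form, and then to read off its Taylor expansion at $t=0$.

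From \eqref{eq-definizioneY-vari} one has $V_\nu|_{\Phi(\gamma,\vartheta)}=\tfrac{1}{\sqrt 2}(\cos\gamma,\sin\gamma,-\cos\vartheta,-\sin\vartheta,0)$, which is perpendicular to $\Phi$ in $\R^5$ and of unit norm. Substituting into \eqref{def-Phit-Vnu} and collecting terms yields the simple product form
\[
\Phi_t(\gamma,\vartheta)=\bigl(a(t)\cos\gamma,\,a(t)\sin\gamma,\,b(t)\cos\vartheta,\,b(t)\sin\vartheta,\,c(t)\bigr),
\]
where $a(t)=(\tfrac{1}{2}+\tfrac{t}{\sqrt 2})/\sqrt{1+t^2}$, $b(t)=(\tfrac{1}{2}-\tfrac{t}{\sqrt 2})/\sqrt{1+t^2}$ and $c(t)=1/\sqrt{2(1+t^2)}$. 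The identity $a^2+b^2+c^2=1$ confirms that $\Phi_t\colon\t\to\s^4$, while differentiating $\Phi_t=(\Phi+tV_\nu)/\sqrt{1+t^2}$ at $t=0$ gives $\partial_t\Phi_t|_{t=0}=V_\nu$. Geometrically, $\Phi_t(\t)$ is a flat product torus of radii $a(t),b(t)$ sitting at ``height'' $c(t)$ in $\s^4$.

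The key computation is the tension field. Since $X_\gamma=2\partial_\gamma$ and $X_\vartheta=2\partial_\vartheta$ are parallel on the flat torus, the $\R^5$-valued tension is simply $\tau^{\R^5}(\Phi_t)=4(\partial_\gamma^2+\partial_\vartheta^2)\Phi_t$. Combining this with $|d\Phi_t|^2=4(a^2+b^2)=4(1-c^2)$ and the composition identity $\tau(\Phi_t)=\tau^{\R^5}(\Phi_t)+|d\Phi_t|^2\,\Phi_t$ (which follows because the second fundamental form of $\s^4\subset\R^5$ is $B(X,Y)=-\langle X,Y\rangle\Phi_t$), one obtains
\[
\tau(\Phi_t)=4\bigl(-ac^2\cos\gamma,\,-ac^2\sin\gamma,\,-bc^2\cos\vartheta,\,-bc^2\sin\vartheta,\,c(1-c^2)\bigr).
\]
Squaring and using $a^2+b^2=1-c^2$ collapses this to $|\tau(\Phi_t)|^2=16\,c(t)^2\bigl(1-c(t)^2\bigr)$, which is \emph{constant on $\t$}. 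Since $\vol(\t)=\pi^2$, this gives
\[
E_2(\Phi_t)=8\pi^2\,c(t)^2\bigl(1-c(t)^2\bigr)=\frac{2\pi^2(1+2t^2)}{(1+t^2)^2}.
\]

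The conclusion is then a one-variable Taylor expansion: $(1+t^2)^{-2}(1+2t^2)=1-t^4+2t^6+O(t^8)$, so $E_2(\Phi_t)=2\pi^2-2\pi^2 t^4+O(t^6)$. Hence the first three derivatives at $t=0$ vanish and $\tfrac{d^4}{dt^4}\big|_{t=0}E_2(\Phi_t)=-4!\cdot 2\pi^2=-48\pi^2$, as required. The only mildly subtle point is checking that after squaring $\tau(\Phi_t)$ all the angular dependence cancels so that $|\tau(\Phi_t)|^2$ is a function of $c(t)$ alone; once this cancellation is verified the problem becomes truly one-dimensional and the vanishing of the first three derivatives is immediate from the absence of $t^1$, $t^2$, $t^3$ terms in the expansion.
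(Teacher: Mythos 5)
Your proposal is correct and follows essentially the same route as the paper: compute $\tau(\Phi_t)=-\Delta\Phi_t+|d\Phi_t|^2\Phi_t$ explicitly, obtain $|\tau(\Phi_t)|^2=\frac{4+8t^2}{(1+t^2)^2}$ (your $16c^2(1-c^2)$ is exactly this), integrate over $\t$ to get $E_2(\Phi_t)=\frac{2\pi^2(1+2t^2)}{(1+t^2)^2}$, and read off the derivatives from the Taylor expansion at $t=0$. The parametrisation by $a(t),b(t),c(t)$ is just a tidy repackaging of the paper's computation, and all your intermediate formulas agree with the paper's.
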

\begin{proof}
Let us consider the variation $\Phi_t:\t \to \s^4 \subset \R^5$ defined in \eqref{def-Phit-Vnu}. It is immediate to check that $\partial \Phi_t \slash \partial t \big |_{t=0}=V_\nu$. Then we have to compute explicitly the tension field of $\Phi_t$. To this purpose, we observe that
\begin{equation*}\label{tension-Phit-Vnu}
 \tau (\Phi_t)=- \Delta \Phi_t + \big | d \Phi_t \big |^2 \Phi_t \,,
 \end{equation*} 
where $\Delta$ was given in \eqref{eq:definition-laplace}. Now, a routine computation yields:
\begin{eqnarray*}
\tau (\Phi_t)&=&\Big (-\frac{\left(\sqrt{2} \,t+1\right) \cos \gamma }{\left(t^2+1\right)^{3/2}},-\frac{\left(\sqrt{2}\, t+1\right) \sin \gamma }{\left(t^2+1\right)^{3/2}},\frac{\left(\sqrt{2}\, t-1\right) \cos \vartheta }{\left(t^2+1\right)^{3/2}},\\
&&\frac{\left(\sqrt{2} \,t-1\right) \sin \vartheta }{\left(t^2+1\right)^{3/2}},\frac{\sqrt{2} \left(2 t^2+1\right)}{\left(t^2+1\right)^{3/2}}\Big )\,.
\end{eqnarray*}
From this we obtain
\[
\big | \tau (\Phi_t)\big|^2= \frac{4+8t^2}{(t^2+1)^2} \,.
\]
Finally,
\[
E_2(\Phi_t)= \frac{1}{2} \int_{\t} \big | \tau (\Phi_t)\big|^2 \,dV_{\t}=\frac{2+4t^2}{(t^2+1)^2} \,\pi^2
\]
from which \eqref{derivate-E2-Phit-Vnu} follows readily.
\end{proof}
\section{Further studies: composition and $p$-harmonic index and nullity of $\varphi$}\label{Composition}

So far, we computed the biharmonic index and the biharmonic nullity of $\Phi=i \circ \varphi$. 
The aim of the first part of this section is to carry out a detailed study of a natural sub-class of variations, i.e., variations of the form $\Phi_{s,t}=i \circ \varphi_{s,t}$. As a completion of this analysis, in the second part of the section we shall compute the $p$-harmonic index and nullity of $\varphi$. 

Now, more generally, let $\tilde{\Phi}=\tilde{\varphi} \circ i$, where $\tilde{\varphi}: M^m \to \s^{n-1}(\tfrac{1}{\sqrt 2})$, $m \geq 1,\,n \geq 3$, is a minimal immersion and $i:\s^{n-1}(\frac{1}{\sqrt 2}) \to \s^n$, is the biharmonic small hypersphere. The map $\tilde{\Phi}$ is proper biharmonic and we shall determine a general \textit{sufficient} condition which ensures that the second variation of $\tilde{\Phi}$ is nonnegatively defined on $\mathcal{C}\big (\tilde{\varphi}^{-1}T\s^{n-1} ( \tfrac{1}{\sqrt 2} )\big )$.

To this purpose, we study $2$-parameter variations
\[
\tilde{\Phi}_{s,t}=i \circ \tilde{\varphi}_{s,t} \,,
\]
where 
$$\tilde{\varphi}_{s,t}:M^m \to \s^{n-1} ( \tfrac{1}{\sqrt 2})$$
is a $2$-parameter variation of $\tilde{\varphi} $.

Let $V,W$ be vector fields associated to $\tilde{\Phi}_{s,t}$ as in \eqref{V-W}. We denote the subspace of these vector fields as 
\begin{equation*}
\mathcal{W}^{\tilde{\varphi}} \subset \mathcal{C}\big (\tilde{\Phi}^{-1}(T \s^n)\big ) 
\end{equation*}
(note that, geometrically, there is a natural identification between $\mathcal{W}^{\tilde{\varphi}}$ and $\mathcal{C}\big (\tilde{\varphi}^{-1}T\s^{n-1}(\frac{1}{\sqrt 2}) \big )$).
Now,
\[
E_2\left (\tilde{\Phi}_{s,t} \right )= \frac{1}{2} \int_{M} \left |\tau \left (\tilde{\Phi}_{s,t} \right ) \right |^2 dV_M
\]
and we observe that, in this case, 
\begin{equation}\label{composition}
\left |\tau \left (\tilde{\Phi}_{s,t} \right ) \right |^2=\left |d i\big (\tau (\tilde{\varphi}_{s,t})\big )+ {\rm trace}\nabla d i (d \tilde{\varphi}_{s,t} \cdot,d \tilde{\varphi}_{s,t} \cdot) \right |^2=|\tau \big(\tilde{\varphi}_{s,t}\big)|^2+ |d \tilde{\varphi}_{s,t}|^4
\end{equation}
where the last equality is a consequence of the fact that $i$ is an isometric immersion and the two terms are orthogonal. 

As a consequence of \eqref{composition}, for all $ V,W \in \mathcal{W}^{\tilde{\varphi}}$ we have:
\[
 (I_2^{\tilde{\Phi}} (V),W)= (I_2^{\tilde{\varphi}} (V),W) + 2 (J^{\tilde{\varphi}}_{(4)}(V),W)=((J^{\tilde{\varphi}})^2 (V),W) + 2 \,(J^{\tilde{\varphi}}_{(4)}(V),W)\,,
 \]
where $J^{\tilde{\varphi}}_{(p)}$ is the Jacobi operator associated to the $p$-energy and we write $J^{\tilde{\varphi}}$ for $J^{\tilde{\varphi}}_{(2)}$. 

Now, let $\pi:\mathcal{C}\big (\tilde{\Phi}^{-1}(T \s^n)\big )\to \mathcal{W}^{\tilde{\varphi}}$ be the orthogonal projection and denote 
\begin{equation}\label{I2Phitilde}
I_2^{\tilde{\Phi},\pi}(V)=\pi \big (I_2^{\tilde{\Phi}} (V)\big )=(J^{\tilde{\varphi}})^2 (V) + 2 \,J^{\tilde{\varphi}}_{(4)}(V)\quad \forall V\in \mathcal{W}^{\tilde{\varphi}}\,.
\end{equation}

We want to study the quadratic form  
\begin{equation}\label{Hessian-varphi}
H(E_2)_{\tilde{\Phi}} (V,W)= (I_2^{\tilde{\Phi},\pi} (V),W)\quad \forall\,V,W \in \mathcal{W}^{\tilde{\varphi}}\,.
\end{equation}
Our general result in this context is:
\begin{theorem}\label{Th-composition}
Suppose that no eigenvalue of $J^{\tilde{\varphi}}$ belongs to the open interval $(-2m,0)$. Then the quadratic form \eqref{Hessian-varphi} is nonnegatively defined, i.e.,
\begin{equation}\label{I2varphi-nonnegative}
(I_2^{\tilde{\Phi},\pi} (V),V) \geq 0 \quad \forall\,V \in \mathcal{W}^{\tilde{\varphi}}\,.
\end{equation}
\end{theorem}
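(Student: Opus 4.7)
The plan is to reduce the quadratic form $(I_2^{\tilde\Phi,\pi}(V),V)$ to a scalar spectral condition on the Jacobi operator $J^{\tilde\varphi}$. Starting from \eqref{I2Phitilde}, we have
\[
(I_2^{\tilde\Phi,\pi}(V),V)=\|J^{\tilde\varphi}(V)\|^2+2\,(J^{\tilde\varphi}_{(4)}(V),V),
\]
so the whole argument comes down to an explicit computation of the $4$-harmonic Jacobi form at $\tilde\varphi$, followed by decomposition in eigensections.

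The first step is to compute $(J^{\tilde\varphi}_{(4)}(V),V)$. Because $\tilde\varphi$ is a minimal isometric immersion, $|d\tilde\varphi|^2=m$ is constant and $\tilde\varphi$ is automatically $4$-harmonic. Writing $f(t)=|d\tilde\varphi_t|^2$ for a one-parameter variation with associated field $V$, one has $f'(0)=2\langle d\tilde\varphi,\nabla V\rangle$ (the pairing being the trace over a local orthonormal frame on $M$), and because $\tilde\varphi$ is harmonic, $\int_M \tfrac{1}{2}f''(0)\,dv_M=(J^{\tilde\varphi}(V),V)$. Differentiating $E_4=\tfrac14\int_M f^2\,dv_M$ twice, evaluating at $t=0$, and using $f(0)=m$ constant, a short manipulation gives
\[
(J^{\tilde\varphi}_{(4)}(V),V)= m\,(J^{\tilde\varphi}(V),V)+2\int_M\langle d\tilde\varphi,\nabla V\rangle^2\,dv_M.
\]
Substituting into the expression above yields
\[
(I_2^{\tilde\Phi,\pi}(V),V)=\|J^{\tilde\varphi}(V)\|^2+2m\,(J^{\tilde\varphi}(V),V)+4\int_M\langle d\tilde\varphi,\nabla V\rangle^2\,dv_M.
\]
The last term is manifestly nonnegative, so it suffices to prove that the sum of the first two terms is nonnegative.

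For the final step, since $J^{\tilde\varphi}$ is a self-adjoint elliptic operator on $\mathcal{C}\bigl(\tilde\varphi^{-1}T\s^{n-1}(\tfrac{1}{\sqrt 2})\bigr)$ and $M$ is compact, the spectral theorem provides an $L^2$-orthonormal basis of eigensections $\{V_k\}$ with real eigenvalues $\{\mu_k\}$. Expanding $V=\sum_k c_k V_k$, Parseval's identity gives
\[
\|J^{\tilde\varphi}(V)\|^2+2m\,(J^{\tilde\varphi}(V),V)=\sum_k c_k^2\,\mu_k(\mu_k+2m).
\]
The scalar quadratic $\mu\mapsto\mu(\mu+2m)$ is nonnegative exactly on $\R\setminus(-2m,0)$. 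The hypothesis that no eigenvalue $\mu_k$ lies in $(-2m,0)$ therefore forces every summand to be nonnegative, which completes the proof of \eqref{I2varphi-nonnegative}.

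The only serious step is the derivation of the formula for $(J^{\tilde\varphi}_{(4)}(V),V)$ in the first stage. This will likely be established in the paper's treatment of the $p$-harmonic index and nullity of $\varphi$ announced in the introduction, so it can be invoked directly; alternatively it follows by a short direct computation exploiting the fact that $|d\tilde\varphi|^2=m$ is constant, which eliminates all the terms coming from differentiating $|d\tilde\varphi_t|^{p-2}$ except for the explicit gradient contribution $2\int\langle d\tilde\varphi,\nabla V\rangle^2\,dv_M$. The spectral tail of the argument is then purely formal.
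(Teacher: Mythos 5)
Your argument is correct and follows essentially the same route as the paper: both reduce the quadratic form to
\[
(I_2^{\tilde\Phi,\pi}(V),V)=\|J^{\tilde\varphi}(V)\|_{L^2}^2+2m\,(J^{\tilde\varphi}(V),V)+4\int_M\langle dV,d\tilde\varphi\rangle^2\,dv_M
\]
and then conclude from the nonnegativity of $\mu\mapsto\mu(\mu+2m)$ outside $(-2m,0)$, the last term being nonnegative on its own. The differences are only in how the middle identity is obtained and how explicitly the spectral step is carried out. The paper quotes the Nagano--Sumi formula \eqref{Jp-relation-J} for $J^{\tilde\varphi}_{(p)}$ with $p=4$ and then, writing $V^{\top}=d\tilde\varphi(\xi)$, shows by a geodesic-frame computation using minimality that $\big(d^*(\langle dV,d\tilde\varphi\rangle d\tilde\varphi),V\big)=\int_M(\Div\xi)^2\,dv_M$; you instead rederive the same quadratic-form identity directly from the second variation of $E_4$, using that $|d\tilde\varphi|^2=m$ is constant, which also makes the geometric identification with $\Div\xi$ unnecessary since $\int_M\langle dV,d\tilde\varphi\rangle^2\,dv_M\geq 0$ is evident as it stands. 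Your computation of $(J^{\tilde\varphi}_{(4)}(V),V)$ is sound: $\tilde\varphi$ is indeed $4$-harmonic because it is harmonic with constant energy density, so the second derivative of $E_4$ along a variation depends only on $V$ and equals the Hessian. On the final step you are in fact slightly more explicit than the paper, which verifies the inequality only for eigensections of $J^{\tilde\varphi}$ and leaves the expansion of a general $V$ implicit; your Parseval argument, combined with the separate nonnegativity of the gradient term (which is not diagonal in the eigenbasis of $J^{\tilde\varphi}$), closes that point cleanly.
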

\begin{proof} We know that
\begin{eqnarray}\label{Jacobi-operator}
 J^{\tilde{\varphi}}(V)&=&\overline{\Delta}V + {\rm trace}\Big ({\rm Riem}^{\s^{n-1}(1\slash \sqrt 2)}\big(d\tilde{\varphi}(\cdot),V \big )d\tilde{\varphi}(\cdot) \Big )\nonumber\\
 &=&\overline{\Delta}V + {\rm trace}\Big ( 2 \big [-\langle d\tilde{\varphi}(\cdot) , d\tilde{\varphi}(\cdot)\rangle V+\langle V , d\tilde{\varphi}(\cdot)\rangle d\tilde{\varphi}(\cdot) \big ] \Big )\\
 &=&\overline{\Delta}V + {\rm trace}\Big ( 2 \big [-m V+ V^{\top} \big ] \Big )\nonumber\,,
\end{eqnarray} 
where, for $V\in \mathcal{W}^{\tilde{\varphi}}$, $V^{\top}$ and $V^\perp$ are respectively the tangent and the normal component.

The expression of $J^{\tilde{\varphi}}_{(p)}$ was computed in \cite{Nagano} for a generic harmonic map $\tilde{\varphi}$. Because, in our context, $\tilde{\varphi}$ is a minimal immersion, (1.9) of \cite{Nagano} simplifies and gives:
\begin{equation}\label{Jp-relation-J}
J^{\tilde{\varphi}}_{(p)}(V)=(p-2)m^{\frac{p-4}{2}}\, d^* \big (\langle d V, d \tilde{\varphi} \rangle d \tilde{\varphi} \big )+m^{\frac{p-2}{2}}\, J^{\tilde{\varphi}}(V) \,.
\end{equation}
Now, using \eqref{Jp-relation-J} with $p=4$ in \eqref{I2Phitilde} we deduce that
\begin{equation}\label{I2Phitilde-bis}
I_2^{\tilde{\Phi},\pi}(V)=(J^{\tilde{\varphi}})^2 (V) + 2m \,J^{\tilde{\varphi}}(V)+4\,d^* \big (\langle d V, d \tilde{\varphi} \rangle d \tilde{\varphi} \big ) \quad \forall V\in \mathcal{W}^{\tilde{\varphi}}\,.
\end{equation}  
Since $\tilde{\varphi}$ is an immersion, there exists a unique vector field $\xi \in \mathcal{C}(TM)$ such that 
\[
d\tilde{\varphi}(\xi)=V^{\top} \,.
\]
We prove that \begin{equation}\label{d*-calcolo-xi}
\Big (d^* \big (\langle d V, d \tilde{\varphi} \rangle d \tilde{\varphi} \big ),V \Big)= \int_{M}({\rm div} \xi)^2 dV_M \,.
\end{equation}
Indeed, let $P \in M$ be arbitrarily fixed. We use a geodesic frame field $\{X_i\}$ around $P$. Then, at $P$ we have:
\begin{eqnarray*}
\langle d V, d \tilde{\varphi} \rangle&=& \sum_{i=1}^m \langle \nabla_{X_i}^{\tilde{\varphi}} (V^{\top}+V^\perp), d \tilde{\varphi}(X_i) \rangle \\
&=&\sum_{i=1}^m \Big \{ \langle\nabla^M_{X_i} \xi,X_i \rangle + \langle B(X_i,\xi),X_i \rangle\\
&&+\langle\nabla^\perp_{X_i}V^\perp,X_i \rangle - \langle A_{V^\perp}(X_i),X_i \rangle \Big \}\\
&=&\sum_{i=1}^m \Big \{ \langle\nabla^M_{X_i} \xi,X_i \rangle- \langle A_{V^\perp}(X_i),X_i \rangle \Big \}\\
&=& {\rm div}(\xi) -\sum_{i=1}^m  \langle B(X_i,X_i),V^\perp \rangle \\
&=& {\rm div}(\xi)-m \langle H, V^\perp \rangle = {\rm div}(\xi)\,,
\end{eqnarray*}
where $B$ and $A$ are the second fundamental form and the shape operator of $M$.
Thus
\[
\Big (d^* \big (\langle d V, d \tilde{\varphi} \rangle d \tilde{\varphi} \big ),V \Big)=\int_{M}\left (\langle d V, d \tilde{\varphi} \rangle \right )^2 dV_M  = \int_{M}({\rm div} \xi)^2 dV_M 
\]
and so \eqref{d*-calcolo-xi} is verified.

Now, we can end the proof. Let $V\in \mathcal{W}^{\tilde{\varphi}}$ and assume $J^{\tilde{\varphi} }(V)= \mu V$. Then it follows from \eqref{I2Phitilde-bis} and \eqref{d*-calcolo-xi} that
\[
\big( I_2^{\tilde{\Phi},\pi}(V),V \big )=\int_{M} \big \{(\mu^2 + 2m \mu)|V|^2+ 4({\rm div} \xi)^2 \big \} dV_M \,,
\]
and from this the conclusion of Theorem\link\ref{Th-composition} follows immediately.
\end{proof}

\begin{example}
Theorem~\ref{Th-composition} can be applied when $\tilde{\varphi}$ is the totally geodesic embedding of $\s^m(1/\sqrt{2})$ in $\s^{n-1}(1/\sqrt{2})$, $m=1,2$, $n\geq 3$. In fact, in these cases, the first eigenvalues of $J^{\tilde{\varphi}}$ are $-2m$ and $0$ which do not belong to the open interval $(-2m,0)$ (see \cite{ER}, Appendix $1$).
\end{example}

Theorem\link\ref{Th-composition} suggests to investigate the spectrum of $J^{\tilde{\varphi} }$, or, more in detail, to compute the index and the nullity of $J^{\tilde{\varphi}}$.

The stability of minimal and CMC immersions with respect to the \textit{volume functional} has been widely studied in the literature (see \cite{Alias,Barbosa} and \cite{Ciraolo} for more recent developments). On the other hand, any minimal immersion is also a \textit{$p$-harmonic map} and so it is natural to study its second variation as a critical point of the \textit{$p$-energy functional} \eqref{p-energy-def}.

There are not many papers where index and nullity computations in this context have been carried out. Some interesting results of this type were obtained in \cite{Wei} where, for instance, the $p$-harmonic index of the identity map was computed when $M$ is an Einstein manifold. Nevertheless, not much has been done when the map is not the identity and so our first goal is to compute the \textit{p-harmonic index} and the \textit{p-harmonic nullity} of the minimal Clifford torus
\begin{eqnarray*}\label{eq-mapdatoroasfera-bis} \nonumber
\varphi:\t\quad &\to &\s^3(\tfrac{1}{\sqrt 2})\subset \R^4\\
    \left ( \gamma, \vartheta \right )  &\mapsto &\left (\frac{1}{2} \cos \gamma,  \frac{1}{2} \sin \gamma,\frac{1}{2} \cos \vartheta,  \frac{1}{2} \sin \vartheta  \right ) \,,\quad 0\leq \gamma, \vartheta\leq 2 \pi \,.
\end{eqnarray*}
We recall that ${\rm Index}_{\rm p-harm}(\varphi)$ and ${\rm Null}_{\rm p-harm}(\varphi)$ are defined precisely as in \eqref{Index-definition} and \eqref{Nullity-definition} respectively, with $I_2$ replaced by $J^\varphi_{(p)}$.
As in the case of $I_2$, when the context is clear we shall simply write $J$, $J_{(p)}$ instead of $J^\varphi$, $J^\varphi_{(p)}$ respectively. Also, when $p=2$, we write $_{\rm harm}$ instead of $_{\rm 2-harm}$.

Our first result in this setting is:
\begin{theorem}\label{Th-varphi}
Let $\varphi:\t \to \s^3(\tfrac{1}{\sqrt 2}) $ be the minimal Clifford torus. Then
\begin{equation*}\label{eq-index+nullity-main}
\begin{array}{llll}
{\rm (i)}&{\rm Index}_{\rm harm}(\varphi)&=& 4 \,; \\ 
{\rm (ii)}&{\rm Null}_{\rm harm}(\varphi)&=& 7 \,. 
\end{array}
\end{equation*}
\end{theorem}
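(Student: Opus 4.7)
The plan is to follow the same framework as in Theorem~\ref{Th-Main}, but now working in the pull-back bundle $\varphi^{-1}T\s^3(\tfrac{1}{\sqrt 2})$. Since the frame $\{V_\gamma,V_\vartheta,V_\nu\}$ is tangent to $\s^3(\tfrac{1}{\sqrt 2})$, it provides an orthonormal frame for this bundle (the vector $V_\eta$, being extrinsic to $\s^3(\tfrac{1}{\sqrt 2})$ in $\s^4$, drops out). Replacing the factor $\langle d\Phi(X),W\rangle\,\Phi$ by $2\langle d\varphi(X),W\rangle\,\varphi$ (because the curvature of $\s^3(\tfrac{1}{\sqrt 2})$ equals $2$), the computations of \eqref{eq-nabla-V-tutti} and \eqref{eq-Deltabar-V-tutti} produce
\[
\nabla^\varphi_{X_\gamma}V_\gamma=-\sqrt 2\,V_\nu,\quad \nabla^\varphi_{X_\vartheta}V_\vartheta=\sqrt 2\,V_\nu,\quad \nabla^\varphi_{X_\gamma}V_\nu=\sqrt 2\,V_\gamma,\quad \nabla^\varphi_{X_\vartheta}V_\nu=-\sqrt 2\,V_\vartheta,
\]
all other derivatives vanishing, and $\overline{\Delta}V_\gamma=2V_\gamma$, $\overline{\Delta}V_\vartheta=2V_\vartheta$, $\overline{\Delta}V_\nu=4V_\nu$. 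Plugging these into \eqref{Jacobi-operator} with $m=2$ and arguing as in Lemma~\ref{lemma3}, one obtains an analog of Proposition~\ref{proposizione-I-esplicito-toro-sfera}: for $\Delta f=\lambda f$,
\[
J(fV_\gamma)=\lambda f\,V_\gamma+4\sqrt 2\,f_\gamma\,V_\nu,\,\, J(fV_\vartheta)=\lambda f\,V_\vartheta-4\sqrt 2\,f_\vartheta\,V_\nu,\,\, J(fV_\nu)=-4\sqrt 2 f_\gamma V_\gamma+4\sqrt 2 f_\vartheta V_\vartheta+\lambda f V_\nu.
\]

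Next I would use the Fourier-type decomposition of $\mathcal{C}\left(\varphi^{-1}T\s^3(\tfrac{1}{\sqrt 2})\right)$ inherited from Section~\ref{proofs}, now with only three vector-field components. This splits the problem into $J$-invariant subspaces $S^{\lambda_0}$ (constants, dimension $3$), $S^{m,0}$ and $S^{0,n}$ (dimension $6$ each, for $m,n\geq 1$), and $S^{m,n}$ (dimension $12$, for $m,n\geq 1$). On $S^{\lambda_0}$ the operator vanishes identically, giving a $3$-dimensional nullity contribution and no negative eigenvalue. On $S^{m,0}$, setting $\lambda=4m^2$, the component on $V_\vartheta$ contributes the eigenvalue $4m^2$ (multiplicity $2$), while the components on $V_\gamma,V_\nu$ split into two $2\times 2$ blocks each with matrix
\[
\begin{pmatrix} 4m^2 & \mp 4\sqrt 2\,m \\ \mp 4\sqrt 2\,m & 4m^2 \end{pmatrix},
\]
whose eigenvalues are $4m(m\pm\sqrt 2)$. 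These are negative only for $m=1$, yielding exactly $2$ negative eigenvalues and no zero for $m=1$, and nothing for $m\geq 2$. By the symmetry $(\gamma,\vartheta)\leftrightarrow(\vartheta,\gamma)$, $S^{0,n}$ contributes identically.

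The main computational step is the analysis of $S^{m,n}$ for $m,n\geq 1$. Here I would introduce the $12$ basis sections obtained from the products $\{\cos(m\gamma)\cos(n\vartheta),\cos(m\gamma)\sin(n\vartheta),\sin(m\gamma)\cos(n\vartheta),\sin(m\gamma)\sin(n\vartheta)\}$ and the three frame vectors. The key structural observation, to be verified from the formulas for $J(fV_\gamma),J(fV_\vartheta),J(fV_\nu)$, is that the basis splits into \emph{four} $J$-invariant triples, each of the form $\{f_1 V_\gamma,f_2 V_\vartheta,f_3 V_\nu\}$ where $(f_1,f_2,f_3)$ has matching parities in $\gamma$ and $\vartheta$ so that the coupling via $f_\gamma,f_\vartheta$ closes up. Each such triple yields the same $3\times 3$ matrix (up to signs that do not affect the spectrum), with characteristic polynomial
\[
(\lambda-x)\bigl[(\lambda-x)^2-32(m^2+n^2)\bigr]=(\lambda-x)\bigl[(\lambda-x)^2-8\lambda\bigr],
\]
and hence eigenvalues $\lambda,\,\lambda\pm 2\sqrt{2\lambda}$. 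Thus $\lambda-2\sqrt{2\lambda}<0$ iff $\lambda<8$, and equals $0$ iff $\lambda=8$; since $\lambda\geq 8$ on these subspaces, the only contribution is a $4$-dimensional kernel at $(m,n)=(1,1)$, with no negative eigenvalues.

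Adding up the contributions gives ${\rm Index}_{\rm harm}(\varphi)=0+2+2+0=4$ and ${\rm Null}_{\rm harm}(\varphi)=3+0+0+4=7$, as claimed. The main obstacle is the correct identification of the four $3\times 3$ blocks inside $S^{m,n}$ together with the verification that they all share the same characteristic polynomial; this replaces the much heavier $8\times 8$ and $16\times 16$ determinantal computations that appeared in Theorem~\ref{Th-Main} and should be tractable without resorting to external software. The $7$ kernel elements so produced are precisely the ones expected on geometric grounds, namely the $6$ Killing vector fields of $\s^3(\tfrac{1}{\sqrt 2})$ restricted to $\varphi(\t)$ together with the constant normal direction $V_\nu$, which reassuringly confirms the count.
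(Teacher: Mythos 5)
Your proposal is correct and follows essentially the same route as the paper: the same explicit formulas for $J(fV_\gamma)$, $J(fV_\vartheta)$, $J(fV_\nu)$ (Proposition~\ref{proposizione-J-esplicito-toro-sfera}) and the same decomposition into the invariant subspaces $S^{0}$, $S^{m,0}$, $S^{0,n}$, $S^{m,n}$, with matching spectra on each block (your $2\times 2$ and $3\times 3$ sub-blocks reproduce the paper's characteristic polynomials, e.g.\ $(x-4m^2)^2\bigl[(x-4m^2)^2-32m^2\bigr]^2$ on $S^{m,0}$) and the same final count $3+4=7$ and $2+2=4$. The only difference is presentational: your finer splitting of $S^{m,n}$ into four invariant triples with characteristic polynomial $(\lambda-x)\bigl[(\lambda-x)^2-8\lambda\bigr]$ makes the computation feasible by hand, where the paper simply reports the outcome of the larger matrix computation.
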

\begin{proof}
The proof follows the lines of the proof of Theorem\link\ref{Th-Main}, so we just point out the relevant intermediate steps. First, a computation as in the proof of Proposition\link\ref{proposizione-I-esplicito-toro-sfera} gives:
\begin{proposition}\label{proposizione-J-esplicito-toro-sfera} Let $f\in C^{\infty}\left ( \t \right )$ be an eigenfunction of $\Delta$ with eigenvalue $\lambda$. Then
\begin{equation}\label{eq-J-fV-tutti}
\begin{array}{llll}
{\rm (i)}&J \left ( f\, V_\gamma \right )&=&\lambda f \,V_\gamma + 4 \sqrt 2 f_{\gamma }\, V_\nu \\
&&&\\
{\rm (ii)}&J \left ( f\, V_\vartheta \right )&=& \lambda f\, V_\vartheta - 4 \sqrt 2 f_{\vartheta }\, V_\nu  \\
&&&\\
{\rm (iii)}&J \left ( f\, V_\nu \right )&=&-4 \sqrt 2 f_{\gamma}\, V_{\gamma}+4 \sqrt 2 f_{\vartheta}\, V_{\vartheta}+\lambda \,f \,V_\nu\,.
\end{array}
\end{equation}
\end{proposition}
The subspaces $S^0$ and $S^{m,n}$ are defined precisely as in the proof of Theorem\link\ref{Th-Main}, just omitting $V_\eta$ and working in $\R^4$. So, now, the subspace $S^0$ is spanned $\{V_\gamma,V_\vartheta,V_\nu \}$ and $\dim (S^0)=3$. It follows immediately from Proposition\link\ref{proposizione-J-esplicito-toro-sfera} with $\lambda=0$ that $S^0$ belongs to ${\rm Ker}(J)$ and thus its contribution to ${\rm Null}_{\rm harm}(\varphi)$ is equal to $3$.

As for $S^{m,0}$, $m \geq1$, we construct the $6\times 6$-matrices $(J(e_i),e_j)$ and find that their associated characteristic polynomial is
\[
P(x)=(-4 m^2 + x)^2 (16 m^4 + x^2 - 8 m^2 (4 + x))^2\,.
\]
Then we see that, when $m \geq2$, all the eigenvalues are positive. By contrast, when $m=1$, we have one negative eigenvalue $\mu_1=4-4\sqrt 2$ with multiplicity $2$. The contribution of the subspaces $S^{0,n}$ is analogous. In summary, the subspaces $S^{m,0}$ and $S^{0,n}$, $m,n \geq1$ do not contribute to ${\rm Null}_{\rm harm}(\varphi)$ and give a contribution equal to $4$ to ${\rm Index}_{\rm harm}(\varphi)$. 
Next, one studies the subspaces $S^{m,n}$, $m,n\geq1$, and finds that there is no contribution to ${\rm Index}_{\rm harm}(\varphi)$. By contrast, the eigenvalue $\mu=0$ appears in the study of $S^{1,1}$ with multiplicity $4$. Adding up all the contributions we end readily the proof of Theorem\link\ref{Th-varphi}.
\end{proof}
\begin{remark}In order to complete the analysis which we carried out during the proof of Theorem\link\ref{Th-varphi}, we point out that a basis for the $4$-dimensional eigenspace $\mathcal{W}_{\mu_1}$ associated to $\mu_1$ is $\{W_1,W_2,W_3,W_4 \}$, where
\begin{eqnarray}\label{base-autovett-varphi}
W_1&=&\frac{\sqrt 2}{\pi} \Big [\cos \gamma\, V_\gamma + \sin \gamma \,V_\nu \Big ] \\\nonumber
W_2&=&\frac{\sqrt 2}{\pi} \Big [-\sin \gamma\, V_\gamma + \cos \gamma \,V_\nu \Big ] \\\nonumber
W_3&=&\frac{\sqrt 2}{\pi} \Big [-\cos \vartheta\, V_\vartheta + \sin \vartheta \,V_\nu \Big ] \\\nonumber
W_4&=&\frac{\sqrt 2}{\pi} \Big [\sin \vartheta\, V_\vartheta + \cos \vartheta \,V_\nu \Big ]\,. \nonumber
\end{eqnarray}
Let $\mathcal{W}_{{\rm conf}}\subset \mathcal{C}\Big(\varphi^{-1}T\s^3\big(\frac{1}{\sqrt 2}\big) \Big )$ denote the subspace determined by the restrictions to $\varphi(\t)$ of the conformal fields on $\s^3\big(\frac{1}{\sqrt 2}\big )$. These vector fields can be conveniently described as follows. Let $a=(a_1,a_2,a_3,a_4)\in \R^4$. Then the elements of $\mathcal{W}_{{\rm conf}}$ have the following form:
$$
V_{a}=a-2\langle a, \varphi \rangle \varphi \,.
$$
Since $\varphi(\t)$ is not contained in any hyperplane of $\R^4$, it is easy to check that $\dim(\mathcal{W}_{{\rm conf}})=4$. It is well-known that conformal fields have been used to prove the instability of harmonic maps into $\s^n$, $ n \geq3$. In our example, a computation using \eqref{Jacobi-operator} and standard properties of $d$ and $d^{\ast}$ shows that, for all $a \in \R^4, \,a \neq \vec{0}$,
\[
(J(V_a),V_a)=2 \int_{\t} \left(2 [4 \langle a, \varphi \rangle^2-|a|^2]+ \langle V_\gamma,V_a\rangle^2+\langle V_\vartheta,V_a\rangle^2\right) dV_{\t}=-\pi^2 |a|^2
\]
\[
(V_a,V_a)=\int_{\t}\left( |a|^2-2\langle a, \varphi \rangle^2\right)dV_{\t}=\frac{3}{4} \pi^2 |a|^2
\]
and from this 
\[
\frac{(J(V_a),V_a)}{(V_a,V_a)}=-\frac{4}{3}>\mu_1 \,.
\]
We deduce that the Hessian is negatively defined on $\mathcal{W}_{{\rm conf}}$, but we have $\mathcal{W}_{{\rm conf}} \cap\mathcal{W}_{\mu_1}=\{ \vec{0}\}$. 

By contrast, we have verified that the Hessian of $\Phi$ is positively defined on the subspace determined by the conformal vector fields on $\s^4$. 
\end{remark}
\vspace{2mm}

The proof of Theorem\link\ref{Th-varphi} has shown that $J^\varphi$ admits the negative eigenvalue $\mu=4-4\sqrt 2 \in (-4,0)$. Therefore, in this case, the hypothesis of Theorem\link\ref{Th-composition} is not verified. Thus, in this case, the study of the quadratic form \eqref{Hessian-varphi} must be carried out directly. The result of this investigation is:
\begin{proposition}\label{proposition-Hphipi}
Let $H(E_2)_{\Phi} (V,W)$ be the quadratic form defined as in \eqref{Hessian-varphi}.
Then
\[
(I_2^{\Phi,\pi} (V),V) \geq 0 \quad \forall\,V \in \mathcal{W}^{\tilde{\varphi}}\,.
\]
\end{proposition}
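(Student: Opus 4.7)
The plan is to observe that Proposition \ref{proposition-Hphipi} is essentially a corollary of Theorem \ref{Th-Main}, once one pins down the precise location of the unique negative direction of $I_2^{\Phi}$. First I would note that, since $\pi$ is $L^{2}$-orthogonal projection onto $\mathcal{W}^{\varphi}$, for every $V\in \mathcal{W}^{\varphi}$ one has $\pi V=V$, so by self-adjointness of $\pi$
\begin{equation*}
\bigl(I_2^{\Phi,\pi}(V),V\bigr) \;=\; \bigl(\pi\,I_2^{\Phi}(V),V\bigr) \;=\; \bigl(I_2^{\Phi}(V),\pi V\bigr) \;=\; \bigl(I_2^{\Phi}(V),V\bigr).
\end{equation*}
Hence the inequality reduces to showing that $(I_2^{\Phi}(V),V)\geq 0$ for every $V\in \mathcal{W}^{\varphi}$: one must check that the full Hessian of $E_{2}$ at $\Phi$ is nonnegative once restricted to $\mathcal{W}^{\varphi}$.

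Next I would revisit the proof of Theorem \ref{Th-Main}. The computations there on the orthogonal decomposition $\mathcal{C}(\Phi^{-1}T\s^{4})=\bigoplus_{(m,n)} S^{m,n}$ show that $I_{2}^{\Phi}$ is nonnegative on every invariant summand $S^{m,0}$, $S^{0,n}$ with $m,n\geq 1$ and $S^{m,n}$ with $m,n\geq 1$ (each contributing $0$ to $\mathrm{Index}_{2}(\Phi)$), while on $S^{\lambda_{0}}=S^{0,0}$ the operator acts diagonally on $\{V_{\gamma},V_{\vartheta},V_{\nu},V_{\eta}\}$ with eigenvalues $(0,0,0,-16)$. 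In particular, the one-dimensional negative eigenspace of $I_{2}^{\Phi}$ is spanned exactly by the constant section $V_{\eta}$.

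To conclude, I would use that, by definition, every $V\in \mathcal{W}^{\varphi}$ takes pointwise values in $T\s^{3}(\tfrac{1}{\sqrt 2})\subset T\s^{4}$, so $\langle V(P),V_{\eta}(P)\rangle=0$ for all $P\in \t$; integrating, $(V,V_{\eta})_{L^{2}}=0$. Thus $V$ is $L^{2}$-orthogonal to the negative eigenspace of the self-adjoint operator $I_{2}^{\Phi}$, and expanding $V$ in an orthonormal basis of eigenvectors of $I_{2}^{\Phi}$ yields
\begin{equation*}
(I_{2}^{\Phi}(V),V) \;=\; \sum_{\mu_{i}\geq 0} \mu_{i}\,\|V_{i}\|^{2} \;\geq\; 0.
\end{equation*}

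The only delicate point is verifying that the negative eigendirection is truly spanned by the constant section $V_{\eta}$ (and not by some other combination that might leak into $\mathcal{W}^{\varphi}$); however, this is precisely guaranteed by the analysis of $S^{\lambda_{0}}$ in the proof of Theorem \ref{Th-Main}, together with the positivity contributions from all the other $S^{m,n}$. Once these two ingredients are in place, the argument is structural and dispenses with any further matrix computation of the sort that produced Theorem \ref{Th-Main}.
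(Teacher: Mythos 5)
Your proof is correct, and it rests on the same two ingredients as the paper's argument: Theorem \ref{Th-Main} and the pointwise (hence $L^2$) orthogonality of $\mathcal{W}^{\varphi}$ to $V_\eta$; the logical packaging, however, is a bit different. The paper proceeds by contradiction: if the projected form had a negative direction $V^*\in\mathcal{W}^{\varphi}$, then, since $(I_2^{\Phi}(V^*),V_\eta)=(V^*,I_2^{\Phi}(V_\eta))=-16\,(V^*,V_\eta)=0$, the quadratic form of $I_2^{\Phi}$ would be negative definite on the two-dimensional span of $V^*$ and $V_\eta$, forcing ${\rm Index}_2(\Phi)\geq 2$; this needs only ${\rm Index}_2(\Phi)=1$ together with the single eigenrelation $I_2^{\Phi}(V_\eta)=-16\,V_\eta$. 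Your direct argument instead invokes the finer spectral fact that the entire negative eigenspace of $I_2^{\Phi}$ is spanned by the constant section $V_\eta$, and then expands $V$ in an $L^2$-orthonormal eigenbasis; that fact is not contained in the statement of Theorem \ref{Th-Main} but in its proof (the eigenvalue $-16$ on $S^{\lambda_0}$ with eigenvector $V_\eta$, and nonnegativity on all the $S^{m,0}$, $S^{0,n}$, $S^{m,n}$), which you correctly flag, so nothing is missing. Your preliminary reduction $(I_2^{\Phi,\pi}(V),V)=(I_2^{\Phi}(V),V)$ via self-adjointness of the projection is exactly what makes the comparison with the unconstrained Hessian possible and is implicit in the paper's proof as well; what the paper's contradiction route buys is a slightly more economical input (it never needs the full spectral decomposition, nor even that a negative direction of $I_2^{\Phi,\pi}$ can be taken to be an eigenvector), while your route gives a more transparent, direct verification of nonnegativity.
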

\begin{proof}
We know from Theorem\link\ref{Th-Main} that Index$_2(\Phi)=1$ and $I_2^\Phi(V_\eta)=\tilde{\mu} V_\eta$, with $\tilde{\mu}=-16 <0$. We argue by contradiction and assume that $I_2^{\Phi,\pi}$ has a negative eigenvalue. Then there exists $V^* \in \mathcal{W}^\varphi$ such that $I_2^{\Phi,\pi}(V^*)=\mu^* V^*$, with $\mu^* <0$. Because both $V^*$ and $I_2^\Phi(V^*)$ are orthogonal to $V_\eta$, it is easy to conclude that $I_2^\Phi$ would be negatively defined on the $2$-dimensional subspace spanned by $V^*$ and $V_\eta$, a fact which contradicts Theorem\link\ref{Th-Main} and completes the proof of the proposition.
\end{proof}

\begin{remark}
We point out that Proposition~\ref{proposition-Hphipi} can also be verified by using the method of  Theorem\link\ref{Th-Main}.
\end{remark}

\begin{remark}
Obviously, $\big ({\rm Ker}(I_2^{\Phi})\cap \mathcal{W}^\varphi \big ) \subset {\rm Ker}(I_2^{\Phi,\pi})$. Using the method of  Theorem\link\ref{Th-Main}, we could verify that actually $\big ({\rm Ker}(I_2^{\Phi})\cap \mathcal{W}^\varphi \big ) = {\rm Ker}(I_2^{\Phi,\pi})$. More precisely, we observed first that, in a similar fashion to \eqref{eq-general-section-toro-sfera}, each section $V \in \mathcal{W}^\varphi$ can be written as
\begin{equation}\label{eq-general-section-toro-sfera-varphi}
V=f_1 \,V_\gamma +f_2\,V_\vartheta+f_3 \,V_\nu  \,,
\end{equation}
where $f_j\in C^\infty \left ( \t \right)$, $j=1,\ldots,3$. Then, we defined the subspaces $S^0$, $S^{m,n}$ taking into account \eqref{eq-general-section-toro-sfera-varphi}. Next, we found that the only contributions to the nullity are $3$ from $S^0$ and $4$ from $S^{1,1}$, so that $\dim \big ({\rm Ker}(I_2^{\Phi,\pi})\big )=7$.
Finally, we observed that $V_\nu$ and $V_1,\ldots,V_6$ belong to $\mathcal{W}^\varphi$ and so they provide a basis for ${\rm Ker}(I_2^{\Phi,\pi})$. By contrast, $V_7,\ldots,V_{10}$ are not in $\mathcal{W}^\varphi$. 
\end{remark}

We complete our analysis by means of the $p$-harmonic extension of Theorem\link\ref{Th-varphi}, a result which shows how index and nullity may depend on $p$. For simplicity, we shall assume $p \geq 1$.
\begin{theorem}\label{Th-varphi-p-harmonic}
Assume that $p \geq 1$ and let $\varphi:\t \to \s^3(\tfrac{1}{\sqrt 2}) $ be the minimal Clifford torus. Then
\begin{equation*}\label{eq-index-p=4}
\begin{array}{llll}
{\rm (i)}&{\rm Index}_{\rm p-harm}(\varphi)&=& 4 \quad{\rm if}\,\,1 \leq p <4\,; \\
{\rm (ii)}&{\rm Index}_{\rm p-harm}(\varphi)&=& 0 \quad{\rm if}\,\,p \geq 4\,.
\end{array}
\end{equation*}
\begin{equation*}\label{eq-nullity-p<4}
\begin{array}{lllll}
{\rm (i)}&{\rm Null}_{\rm p-harm}(\varphi)&=& 7\quad&{\rm if}\,\,1 \leq p <4\,;  \\ 
{\rm (ii)}&{\rm Null}_{\rm p-harm}(\varphi)&=& 11 \quad&{\rm if}\,\,p=4\,; \\ 
{\rm (iii)}&{\rm Null}_{\rm p-harm}(\varphi)&=& 7 \quad&{\rm if}\,\,p>4\,.  
\end{array}
\end{equation*}
\end{theorem}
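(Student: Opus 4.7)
The plan is to follow the strategy of the proof of Theorem~\ref{Th-varphi}, replacing $J^{\varphi}$ by the $p$-harmonic Jacobi operator $J^{\varphi}_{(p)}$. Nagano's formula~\eqref{Jp-relation-J}, specialised to our minimal isometric immersion with domain dimension $2$, gives
\[
J^{\varphi}_{(p)} = \beta_{p}\,J^{\varphi} + \alpha_{p}\,\Lambda, \qquad \beta_{p} = 2^{(p-2)/2}, \quad \alpha_{p} = (p-2)\,2^{(p-4)/2},
\]
with $\Lambda(V) = d^{*}(\langle dV, d\varphi\rangle d\varphi)$ a nonnegative self-adjoint operator. Since $\beta_{p}>0$ for all $p\geq 1$, the signs of the eigenvalues of $J^{\varphi}_{(p)}$ coincide with those of $J^{\varphi} + \tfrac{p-2}{2}\,\Lambda$, and the problem is recast as a one-parameter spectral deformation of $J^{\varphi}$ to which the technique of Theorem~\ref{Th-varphi} applies directly.

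The first concrete step is to derive an analogue of Proposition~\ref{proposizione-J-esplicito-toro-sfera} for $\Lambda$. Using the covariant-derivative formulas~\eqref{eq-nabla-V-tutti} (projected to $\s^{3}(1/\sqrt{2})$, since $V_{\eta}$ is no longer in the codomain) together with the fact that $\varphi$ is minimal, a direct computation gives, whenever $\Delta f = \lambda f$,
\[
\Lambda(fV_{\gamma}) = -4 f_{\gamma\gamma} V_{\gamma} - 4 f_{\gamma\vartheta} V_{\vartheta}, \quad \Lambda(fV_{\vartheta}) = -4 f_{\gamma\vartheta} V_{\gamma} - 4 f_{\vartheta\vartheta} V_{\vartheta}, \quad \Lambda(fV_{\nu}) = 0.
\]
Because $\partial_{\gamma}$ and $\partial_{\vartheta}$ are Killing fields on $\t$ and commute with $\Delta$, the operator $J^{\varphi}_{(p)}$ preserves the $L^{2}$-orthogonal decomposition $S^{0} \oplus \bigoplus_{m,n} S^{m,n}$ used in the proof of Theorem~\ref{Th-varphi}, and its spectrum can therefore be computed blockwise.

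I would then carry out the block-by-block analysis. On $S^{0}$ both $J^{\varphi}$ and $\Lambda$ vanish, producing $3$ zero eigenvalues for every $p$. On $S^{m,0}$ (and, by symmetry, $S^{0,n}$) the six-dimensional space splits into three invariant $2\times 2$ blocks of $J^{\varphi}_{(p)}$, whose spectra can be written down in closed form as a function of $m$ and $p$. On $S^{m,n}$ with $m,n \geq 1$ the twelve-dimensional subspace splits into four invariant $3\times 3$ blocks whose characteristic polynomial, after the substitution $y = \lambda - x$, takes the tractable form
\[
y^{3} + 2(p-2)\,\sigma\,y^{2} - 32\,\sigma\,y - 256(p-2)\,m^{2}n^{2}, \qquad \sigma := m^{2}+n^{2};
\]
a direct verification shows that $(1,\sqrt{2},1)$ lies in the kernel of each such block for every $p$ when $(m,n)=(1,1)$, giving the invariant contribution of $4$ to the nullity from $S^{1,1}$. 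Summing the contributions from all invariant subspaces will yield the stated values of the index and the nullity.

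The main obstacle is the Descartes-sign analysis of the $2\times 2$ and $3\times 3$ blocks uniformly in $(m,n,p)$: one must verify that the trace, the second elementary symmetric function of the eigenvalues, and the determinant of each block all have the correct signs for every $p \geq 1$, so that the resulting index and nullity match the claimed values. For the $3\times 3$ blocks, the positivity of the determinant reduces to the inequality $64\sigma^{2}(\sigma-2) + 32(p-2)(\sigma^{3} - 8m^{2}n^{2}) > 0$, which follows from the strict AM--GM inequality $\sigma^{3} > 8 m^{2}n^{2}$ valid whenever $(m,n) \neq (1,1)$ together with the lower bound $\sigma \geq 5$ on these subspaces; a parallel but more elementary analysis handles the $2\times 2$ blocks and isolates the transition value $p = 4$.
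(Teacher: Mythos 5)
Your strategy is the same as the paper's: the published proof also specialises Nagano's formula \eqref{Jp-relation-J} to $m=2$, computes the $d^*$ term explicitly (your formulas for $\Lambda$ agree with Proposition~\ref{proposizione-Jp-esplicito-toro-sfera}) and then reruns the block-by-block analysis of Theorem~\ref{Th-varphi}. Your treatment of the blocks in $S^{m,n}$, $m,n\geq 1$, checks out: the cubic you write down is exactly the characteristic polynomial of each $3\times 3$ block of $J+\tfrac{p-2}{2}\Lambda$ (in the variable $y=\lambda-x$), its constant term at $(m,n)=(1,1)$ vanishes for every $p$, and the determinant identity you reduce to is correct.

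The genuine gap is in the $2\times 2$ blocks of $S^{m,0}$ and $S^{0,n}$, which you dismiss in one sentence even though they carry the entire $p$-dependence of the statement. In the orthonormal pair obtained from $\cos(m\gamma)V_\gamma$ and $\sin(m\gamma)V_\nu$, the block of $J+\tfrac{p-2}{2}\Lambda$ is
\begin{equation*}
\begin{pmatrix} 2pm^2 & -4\sqrt{2}\,m\\ -4\sqrt{2}\,m & 4m^2\end{pmatrix},
\end{equation*}
with positive trace and determinant $8m^2\,(pm^2-4)$. Hence the sign transition is governed by $pm^2-4$, and on the stated range $p\geq 1$ this vanishes not only at $(m,p)=(1,4)$, the transition you announce, but also at $(m,p)=(2,1)$: at $p=1$ the section $\sqrt{2}\cos(2\gamma)V_\gamma+\sin(2\gamma)V_\nu$ (together with its three companions in $S^{2,0}\oplus S^{0,2}$) is annihilated by $J_{(1)}$, as one checks directly from Proposition~\ref{proposizione-Jp-esplicito-toro-sfera}. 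So your claim that the $2\times 2$ analysis ``isolates the transition value $p=4$'' fails at the endpoint $p=1$: carried out to the end, your own scheme gives index $4$ but nullity $11$ there, so it can only establish the stated nullity values for $p>1$ (this borderline case is in fact problematic for the statement itself, whose published proof is a sketch resting on the same blocks). For $p>1$ the plan does close, but the decisive sign verifications -- the $2\times 2$ determinants above, and the positivity of all three elementary symmetric functions (not just the determinant) for the $3\times 3$ blocks with $(m,n)\neq(1,1)$, uniformly in $p$ -- still need to be written out rather than asserted.
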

\begin{proof} The proof follows exactly the lines given for Theorem\link\ref{Th-varphi} and so we omit the details. To the benefit of  the interested reader, we just point out that, in this context, the version of Proposition\link\ref{proposizione-J-esplicito-toro-sfera} is:
\begin{proposition}\label{proposizione-Jp-esplicito-toro-sfera} Let $f\in C^{\infty}\left ( \t \right )$ be an eigenfunction of $\Delta$ with eigenvalue $\lambda$. Then
\begin{equation}\label{eq-Jp-fV-tutti}
\begin{array}{llll}
{\rm (i)}&J_p \left ( f\, V_\gamma \right )&=&-(p-2)2^{\frac{p-4}{2}}\big(4 f_{\gamma \gamma}V_\gamma+4 f_{\gamma \vartheta}V_\vartheta \big )+2^{\frac{p-2}{2}}\,J (f \,V_\gamma) \\
&&&\\
{\rm (ii)}&J_p \left ( f\, V_\vartheta \right )&=&-(p-2)2^{\frac{p-4}{2}}\big(4 f_{\gamma \vartheta}V_\gamma+4 f_{\vartheta \vartheta}V_\vartheta \big )+2^{\frac{p-2}{2}}\,J (f \,V_\vartheta) \\
&&&\\
{\rm (iii)}&J_p \left ( f\, V_\nu \right )&=& 2^{\frac{p-2}{2}}\,J (f \,V_\nu)\,,
\end{array}
\end{equation}
where $J(V)$ is given in Proposition\link\ref{proposizione-J-esplicito-toro-sfera}.
\end{proposition}
The calculations required to prove this proposition just amount to use \eqref{Jp-relation-J} with $m=2$ and compute explicitly the $d^*$ term.
\end{proof}
\section{Equivariant Index and Nullity}
In Theorems\link\ref{Th-Main} and \ref{Th-Killing} we obtained a complete description of the second variation of $\Phi:\t \to \s^4$. From these results it appears that there exist essentially two geometrically significant directions, that is $V_\eta$, which determines the index, and $V_\nu$, which is the only direction of ${\rm Ker}I_2$ which is not generated by a Killing field either of the ambient or of the domain. 

The Clifford torus $\Phi(\t)$ in $\s^4$ is a $G$-invariant submanifold, with $G={\rm SO}(2) \times {\rm SO}(2)$. The main aim of this section is to show that the key directions $V_\eta,\,V_\nu$ could also be determined by a direct analysis in the orbit space $\s^4 \slash G$. 

It is convenient to consider the following family of maps:
\begin{eqnarray}\label{eq-appendix-mapdatoroasfera} \nonumber
\Phi_{\eta,\nu}:\s^1(R_1)\times \s^1(R_2)  &\to &\s^4 \subset \R^2 \times\R^2 \times \R\\
    \left ( \gamma, \vartheta \right )  &\mapsto &\Big ((\sin \eta \sin \nu )e^{i \gamma},( \sin \eta \cos \nu) e^{i \vartheta}, \cos \eta   \Big ) \,, \; 0 \leq \gamma, \vartheta \leq 2 \pi \,.
\end{eqnarray}
In a map of the type \eqref{eq-appendix-mapdatoroasfera}, we assume that $R_1,R_2>0$ are fixed. 

A map of the type \eqref{eq-appendix-mapdatoroasfera} is $G$-equivariant, i.e., $\Phi_{\eta,\nu}(g x)=g\Phi_{\eta,\nu}( x)$ for all $x \in \t, \,g \in G$ (here $G$ acts naturally on $\t$, and on the first $4$ coordinates of $\R^4$ concerning the target). 

In fact, the family of all the maps of the type \eqref{eq-appendix-mapdatoroasfera} form the set $\Sigma$ of the symmetric points of $C^{\infty}(\t,\s^4)$ with respect to the action of $G$ (see \cite{Palais}). In our case, $\Sigma$ is $2$-dimensional and the tangent space to $\Sigma$ at a symmetric point $\Phi_{\eta,\nu}$ is generated by
\[
\left \{\frac{\partial \Phi_{\eta,\nu}}{\partial \eta}\,, \frac{\partial \Phi_{\eta,\nu}}{\partial \nu} \right \}\,.
\]
The orbit space $\t \slash G$ is just a point, while the orbit space $\mathcal{Q}=\s^4 \slash G$ is naturally identified with the spherical sector $0\leq \eta \leq \pi$, $0 \leq \nu \leq \pi \slash 2$.

By way of summary, at the level of orbit spaces, the map $\Phi_{\eta,\nu}$ can be identified with the point $(\eta,\nu) \in \mathcal{Q}$. To proceed  further, we need to compute the bienergy function.

To this purpose, we observe that the tension field of a map of type \eqref{eq-appendix-mapdatoroasfera} can be computed using
\[
\tau (\Phi_{\eta,\nu})=- \Delta \Phi_{\eta,\nu} + \big | d \Phi_{\eta,\nu} \big |^2 \Phi_{\eta,\nu} 
\]
and, writing $M$ for $\s^1(R_1)\times \s^1(R_2)$, that leads us to
\begin{equation*}\label{bienergy-Phietanu}
E_2(\Phi_{\eta,\nu})=\frac{1}{2}\int_{M}\big | \tau (\Phi_{\eta,\nu})\big |^2 \,dV_{M}=\int_M \,\hat{E}_2(\eta,\nu)\,dV_M\,,
\end{equation*}
where, setting $c=1\slash (32 R_1^4 R_2^4)$,
\begin{eqnarray*}
\hat{E}_2(\eta,\nu)&=&c \Big \{\big(5 R_1^4 - 2 R_1^2 R_2^2 + 
    5 R_2^4 + (3 R_1^4 + 2 R_1^2 R_2^2 + 3 R_2^4) \cos(
      2 \eta)\big ) \sin^2\eta \\
   && - 
 2 (R_1^2 - R_2^2)^2 \cos(4 \nu) \sin^4 \eta + 
 2 (R_1^4 - R_2^4) \cos(2 \nu) \sin^2(2 \eta)\Big \}\,.
\end{eqnarray*}
We shall call $\hat{E}_2(\eta,\nu)$ the \textit{reduced bienergy function}. We point out that, alternatively, the explicit expression of $\tau(\Phi_{\eta,\nu})$ can also be obtained using the reduced energy function and the fact that $\tau(\Phi_{\eta,\nu})$ is tangent to $\Sigma$ at $\Phi_{\eta,\nu}$ (see \cite{MOR4}). 

Now, we are in the cohomogeneity zero case of the reduction theory introduced in \cite{Hsiang}. Then, according to \cite[Proposition~2.5]{BMOR1}, the map $\Phi_{\eta,\nu}$ is biharmonic if and only if $(\eta, \nu)$ is a critical point of $\hat{E}_2$, that is 
\begin{equation}\label{reduced-biharmonicity-equation}
\frac{\partial \hat{E}_2}{\partial \eta}(\eta,\nu)=0 \quad {\rm and}\quad
\frac{\partial \hat{E}_2}{\partial \nu}(\eta,\nu)=0 \,.
\end{equation}
Moreover, the map $\Phi_{\eta,\nu}$ is an isometric immersion if and only if
\begin{equation}\label{cond-isome-immers}
R_1=\sin \eta \sin \nu \quad {\rm and} \quad R_2= \sin \eta \cos \nu \,.
\end{equation}
As we are looking for proper biharmonic immersions which are not congruent, we can assume that $0<\eta , \nu < \pi \slash 2$. Then we check that the only possibility to satisfy both \eqref{reduced-biharmonicity-equation} and \eqref{cond-isome-immers} is
\[
\eta^*= \frac{\pi}{4}\,, \quad \nu^*= \frac{\pi}{4}
\]
which give $R_1=R_2=1 \slash 2$ and so $\Phi_{\eta^*,\nu^*}=\Phi$. The counterparts of $V_\eta,\,V_\nu$ in the orbit space can be described as follows. Let 
\[
\pi :\s^4 \to \big(\mathcal{Q}, \sin^2 \eta \,d \nu^2 + d \eta^2 \big )
\]
be the canonical projection. Then $V_\eta,\,V_\nu$ are horizontal with respect to $\pi$ and 
\begin{equation}\label{projectionVetaVnu}
d\pi(V_\eta)= \frac{\partial}{\partial \eta}\,,\quad d\pi(V_\nu)= \sqrt 2\,\frac{\partial}{\partial \nu}\,.
\end{equation}
Now, we can proceed to the study of the \textit{equivariant second variation}, a notion which was introduced in \cite{MOR3}.  

To this purpose, we compute the Hessian matrix of $\hat{E}_2$ at the critical point $(\eta^*,\nu^*)$. Setting $R_1=R_2=(1\slash 2)$, we obtain
\[
\left [
\begin{array}{rr}
\displaystyle{\frac{\partial^2 \hat{E}_2}{\partial \eta^2}}&\displaystyle{\frac{\partial^2 \hat{E}_2}{\partial \eta \partial \nu} }\\
&\\
\displaystyle{\frac{\partial^2 \hat{E}_2}{\partial \eta \partial \nu}} &\displaystyle{\frac{\partial^2 \hat{E}_2}{\partial \nu^2}}
\end{array}
 \right ]_{(\eta^*,\nu^*)}=
\left [
\begin{array}{rr}
-16&0 \\
0&0
\end{array}
 \right ]\left (=\frac{1}{{\rm Vol}(M)}\left [
\begin{array}{rr}
( I_2(V_\eta),V_\eta )&\frac{1}{\sqrt 2}( I_2(V_\nu),V_\eta ) \\
&\\
\frac{1}{\sqrt 2} ( I_2(V_\eta),V_\nu )&\frac{1}{ 2} (I_2(V_\nu),V_\nu )
\end{array}
 \right ] \right )\,.
\]
Therefore, this analysis in the orbit space at the critical point $(\eta^*,\nu^*)$ tells us that the \textit{equivariant index} of $\Phi$ is equal to $1$ (eigenvalue $\mu_1=-16$, unit eigenvector $\partial \slash \partial \eta$). Moreover, also the \textit{equivariant nullity} of $\Phi$ is equal to $1$ (unit eigenvector $\sqrt 2\,\partial \slash \partial \nu$). These results, together with \eqref{projectionVetaVnu}, suggest that in this example the orbit space analysis displays all the significant second variation features of $\Phi$.

\begin{remark} The method of this section can be extended to other examples. For instance, one could apply these arguments to the ${\rm SO}(\ell+1) \times {\rm SO}(\ell+1)$-invariant proper biharmonic immersions $\Phi_\ell:\s^\ell\big (\frac{1}{2} \big ) \times \s^\ell\big (\frac{1}{2} \big ) \to \s^{2\ell+2}$, $\ell \geq 2$. Of course, it would be nice to be able to exclude,  when $\ell \geq2$, that there exist directions different from $V_\eta,\,V_\nu$ which are geometrically significant for the study of the second variation of $\Phi_\ell$ (index or nullity).   
\end{remark}

\begin{remark} In the case of Theorem\link\ref{Th-varphi} the submanifold $\varphi(\t)$ is again ${\rm SO}(2) \times {\rm SO}(2)$-invariant and the orbit space is $1$-dimensional. In this example, there are $4$ geometrically significant directions which determine the index (see \eqref{base-autovett-varphi}), but it is not possible to recover this $4$-dimensional subspace by carrying out a simplified analysis in the orbit space.
\end{remark}

\begin{remark}
We point out that, in our example $\Phi:\t \to \s^4$, \textit{the normal bundle has dimension equal to two} and so it is interesting to compare this situation with the recent results proved by Ou  concerning the normal stability of certain proper biharmonic \textit{hypersurfaces} of the Euclidean sphere (see \cite{Ou1}). 
\end{remark}

\end{document}